\documentclass[11pt,reqno]{amsart}

\usepackage{a4wide}
\usepackage{amsmath,amssymb} 
\usepackage{bbm}
\usepackage{color}
\usepackage{tikz}
\usepackage{tkz-graph}
\usepackage{tikz-cd} 
\usepackage{dsfont}

\usepackage{bm}

\theoremstyle{plain}
\newtheorem{theorem}{Theorem}
\newtheorem{prop}[theorem]{Proposition}
\newtheorem{lemma}[theorem]{Lemma}
\newtheorem{coro}[theorem]{Corollary}
\newtheorem{fact}[theorem]{Fact}

\theoremstyle{definition}
\newtheorem{definition}[theorem]{Definition}
\newtheorem{remark}[theorem]{Remark}

\newtheorem{example}[theorem]{Example}

\newcommand{\mc}{\mathcal}

\newcommand{\ts}{\hspace{0.5pt}}

\newcommand{\exend}{\hfill $\Diamond$}

\usepackage{mathtools}
\DeclarePairedDelimiter\abs{\lvert}{\rvert}
\DeclarePairedDelimiter\norm{\lVert}{\rVert}
\DeclarePairedDelimiter{\ceil}{\lceil}{\rceil}
\DeclarePairedDelimiter{\floor}{\lfloor}{\rfloor}

\begin{document}

\title[Inflation word entropy for semi-compatible random substitutions]{
Inflation word entropy for \\[2mm]semi-compatible random substitutions
}

\author{Philipp Gohlke}
\address{Fakult\"at f\"ur Mathematik, Universit\"at Bielefeld, \newline
\hspace*{\parindent}Postfach 100131, 33501 Bielefeld, Germany}
\email{pgohlke@math.uni-bielefeld.de}

\begin{abstract}
We introduce the concept of inflation word entropy for random substitutions with a constant and primitive substitution matrix. Previous calculations of the topological entropy of such systems implicitly used this concept and established equality of topological entropy and inflation word entropy, relying on ad hoc methods. We present a unified scheme, proving that inflation word entropy and topological entropy in fact coincide. The topological entropy is approximated by a converging series of upper and lower bounds which, in many cases, lead to an analytic expression.
\end{abstract}

\keywords{Random substitutions, topological entropy, periodic points}

\subjclass[2010]{37B10, 37B40, 52C23}

\maketitle

\section{Introduction}
Random substitution systems provide a model for structures that exhibit both long-range correlations and a positive topological entropy. This combination of aperiodic order and high complexity produces new features like a generic occurrence of both pure point and absolutely continuous components in the diffraction image \cite{bss,gmaster,moll2}. Since the early exposition of random substitutions as branching processes in \cite{peyriere}, they have served as models both in physics \cite{gl} and mathematical biology \cite{koslicki,Li}. A systematic study of some of their properties was undertaken in \cite{rs}. The non-trivial topological entropy is a property that distinguishes random substitutions from usual (deterministic) substitutions which are known to have a complexity function that increases at most linearly \cite{baake}. It has been the subject of recent work to quantify the topological entropy for some specific (families of) examples of random substitutions \cite{bss,gl,gmaster,nilsson-RNMS,nilsson-sqfib}. In all of these references, the first step was to quantify the growth of the number of possible \emph{inflation words}, built from an initial letter under iterated actions of the random substitution. The concept of \emph{inflation word entropy}, introduced in this paper, accounts for this procedure. We show that for a large class of random substitutions, that we will call \emph{semi-compatible}, the inflation word entropy reproduces the value of the topological entropy. Also, we present a way to calculate the inflation word entropy efficiently, yielding a closed form expression in many cases. This result reproduces all of the known values of topological entropy mentioned above, providing much simpler proofs in some of the cases. It also allows us to work out the corresponding value for many new examples. Finally, we show that the topological entropy can be obtained from periodic words (if they exist) and connect it to some related notions for the corresponding tiling spaces.
\\ The paper is structured as follows. In Section~\ref{Sec:Setup} we introduce semi-compatible random substitutions and define the inflation word entropy for those systems. After some preliminary properties, we prove our main result in Section~\ref{Sec:main}, giving a scheme to calculate the topological entropy at the same time. Section~\ref{Sec:examples} is devoted to the discussion of examples and gives a number of criteria that allow us to simplify further the calculation of the topological entropy. We make a connection to periodic points in Section~\ref{Sec:related} and discuss a number of entropy concepts for geometric variants of the symbolic dynamical systems arising from a random substitution. This includes suspensions and other shift systems with a continuous group action.

\section{Setup and notation}
\label{Sec:Setup}

We fix a finite alphabet $\mathcal{A} = \{ a_1, \ldots, a_n\}$ of cardinality $n$ and take $\mathcal{A}^+$ to be the set of finite words in $\mc A$. The set of bi-infinite words in the alphabet $\mathcal{A}$ is given by $\mathcal{A}^{\mathbb{Z}}$. Further, we denote by $\mathcal{F} (\mc A^+)$ the set of finite (non-empty) subsets of $\mathcal{A}^+$. For $A,B \in \mc F(\mc A^+)$ we define a concatenation of sets as $AB := \{ uv \mid u \in A, v \in B\}$, where $uv$ denotes the standard concatenation of words. The cardinality of a set $A \in \mc F(\mc A^+)$ is denoted by $\# A$. A subword of a word $u=u_1 \cdots u_m$ is any word of the form $u_{[k,\ell]} := u_k \cdots u_{\ell}$, with $1 \leqslant k \leqslant \ell \leqslant m$. For every $u,v \in \mc A^+$, we write $v \triangleleft u$ if $v$ is a subword of $u$. Further, we let $\abs{u}$ denote the (symbolic) length of the word $u$ and $\abs{u}_v$ the number of occurences of $v$ in $u$ as a subword. If any of these values is the same for all words $u$ in a set $A \in \mc F(\mc A^+)$, we define $\abs{A}_v := \abs{u}_v$ and $\abs{A}_{(\ell)} := \abs{u}$, for arbitrary $u \in A$. The subscript $(\ell)$ serves as a reminder that the latter should not be mistaken for the cardinality of the set $A$. The \emph{Abelianisation} of a word $u \in \mc A^{+}$ is an $n$-component vector $\Phi(u)$, with $\Phi(u)_i = \abs{u}_{a_i}$, for all $1 \leqslant i \leqslant n$. As before, we can extend $ \Phi$ to sets of words that share a common Abelianisation.
\\A random substitution generalizes the notion of a substitution on the alphabet $\mc A$ by allowing that a letter $a \in \mc A$ is mapped to different words with predetermined probabilities. For the present, purely combinatorial purpose, we avoid specifying the probabilities. 
\begin{definition}
A \emph{random substitution} on the alphabet $\mc A$ is a map $\vartheta \colon \mc A \to \mc F(\mc A^+)$. It is extended to $\mc A^+$ via concatenation of sets
\[
\vartheta \colon \mc A^+ \ni u = u_1 \cdots u_m \, \mapsto \, \vartheta(u_1) \cdots \vartheta(u_m)
\]
and to $\mc F(\mc A^+)$ via taking unions
\[
\vartheta \colon \mc F(\mc A^+) \ni A \, \mapsto \, \bigcup_{u \in A} \vartheta(u),
\]
where the union is not necessarily disjoint. A random substitution $\vartheta$ is called \emph{semi-compatible} if for all $a \in \mc A$ we have that $u,v \in \vartheta(a)$ implies $\bm \Phi(u) = \bm \Phi(v)$.
\end{definition}
The definition of a random substitution given above is called a \emph{multi-valued substitution} in \cite{gmaster}. See the same reference for a discussion of how this concept is related to the notion of random substitutions as defined in \cite{gs,moll,rs}. 
\\Note that for a semi-compatible random substitution, both $\abs{\vartheta^m(v)}_a$ and $\abs{\vartheta^m(v)}_{(\ell)}$ are well-defined for all $a \in \mc A$, $v \in \mc A^+$ and all powers $m \in \mathbb{N}$ of the substitution. A word $v \in \vartheta^m(a)$, for some $a \in \mc A$, is called a (level-$m$) \emph{inflation word} (starting from $a$).

\begin{example}
The random Fibonacci substitution on $\mc A = \{a,b\}$ is defined via $\vartheta \colon a \mapsto \{ab,ba\}$ and $b \mapsto \{a\}$. Here, $\vartheta$ is semi-compatible because $\vartheta(b)$ is a singleton-set and all words in $\vartheta(a)$ have the same number of letters $a$ and $b$ appearing; more precisely, $\Phi(ab) = \Phi(ba) = (1,1)^{\intercal}$.
\end{example}

\begin{definition}
Let $\vartheta$ be a semi-compatible random substitution on $\mc A = \{a_1,\ldots,a_n\}$. The associated \emph{substitution matrix} $M$ is defined via $M_{ij} = \abs{\vartheta(a_j)}_{a_i}$ for all $1 \leqslant i,j \leqslant n$. We call $\vartheta$ \emph{primitive} if $M$ is a primitive matrix. In this case, we denote by $\lambda$ the Perron--Frobenius (PF) eigenvalue of $M$ and write $\bm R$ and $\bm L$ for the right and left PF eigenvector, respectively. The normalisation is chosen as $\norm{\bm R}_1 = 1 = \bm L^\intercal \bm R$.
\end{definition}

\begin{definition}
The \emph{language} of a random substitution $\vartheta$ is given by all words that appear as a subword of some inflation word. More precisely,
\[
\mc L = \{ v \in \mc A^+ \mid v \triangleleft u \in \vartheta^m(a), \mbox{ for some } a \in \mc A, m \in \mathbb{N}, u \in \mc A^+ \}.
\]
Words in $\mc L$ are called \emph{legal}. The set of legal words of a given length $\ell \in \mathbb{N}$, is denoted by $\mc L_{\ell} = \{ v \in \mc L \mid \abs{v} = \ell \}$.
\end{definition}

To a given language $\mc L$, there is an associated shift-dynamical system $(\mathbb{X},\sigma)$. Here,
\[
\mathbb{X} = \{ x \in \mc A^{\mathbb{Z}} \mid x_{[k,\ell]} \in \mc L, \mbox{ for all } k\leqslant \ell \in \mathbb{Z} \}
\]
is closed, and thus compact, as a subset of $\mc A^{\mathbb{Z}}$ (endowed with the product topology) and $\sigma \colon x \mapsto \sigma(x)$, with $\sigma(x)_i = x_{i+1}$ denotes the left shift on $\mathbb{X}$.
The corresponding \emph{topological entropy} (compare \cite{adler}) can be expressed in terms of the language as
\[
s \, = \, \lim_{\ell \rightarrow \infty} \frac{1}{\ell} \log(\# \mc L_{\ell}).
\] 
In the following, we fix $\vartheta$ to be a primitive, semi-compatible random substitution on the alphabet $\mc A = \{a_1,\ldots, a_n\}$. For notational convenience, let us define $\bm{\ell}_m = (\ell_{m,1},\ldots,\ell_{m,n})^\intercal$ and $\bm{q}_m = (q_{m,1},\ldots,q_{m,n})^\intercal$, where 
\begin{align*}
\ell_{m,i} \, = \, \abs{\vartheta^m(a_i)}_{(\ell)} \quad \mbox{and} \quad
q_{m,i} \, = \, \log (\# \vartheta^m(a_i) ),
\end{align*} 
for all $1 \leqslant i \leqslant n$ and $m \in \mathbb{N}$. 
\begin{definition}
The \emph{upper and lower inflation word entropy of type $i$} for $1 \leqslant i \leqslant n$ are given by
\begin{align*}
\underline{s}^I_i & \,= \, \liminf_{m \rightarrow \infty} \frac{q_{m,i}}{\ell_{m,i}}  \, = \, \liminf_{m \rightarrow \infty} \frac{1}{\abs{\vartheta^m(a_i)}_{(\ell)}} \log (\# \vartheta^m(a_i)),
\\\overline{s}^I_i & \,= \, \limsup_{m \rightarrow \infty} \frac{q_{m,i}}{\ell_{m,i}} \, = \, \limsup_{m \rightarrow \infty} \frac{1}{\abs{\vartheta^m(a_i)}_{(\ell)}} \log (\#\vartheta^m(a_i)).
\end{align*} 
\end{definition}
One of the main results of this paper will be that the limits in the above expressions exist and that they are independent of $i$. This will justify us speaking of the \emph{inflation word entropy} $s^I$. Since $\vartheta^m(a_i) \subset \mc L_{\ell_{m,i}}$, it is immediate from the definition that the inflation word entropy is a lower bound for the topological entropy $s$.

The following fact determines the length of arbitrarily large inflation words by linear algebra. It follows immediately from the definition of the substitution matrix $M$; compare \cite[Ch.~4]{baake} and \cite[Ch.~5.3]{queffelec}.
\begin{fact}
\label{Fact:l-iteration}
For every\/ $m \in \mathbb{N}$, we have that\/
$
\bm{\ell}_{m}^\intercal = \bm{1}^\intercal M^m,
$
where\/ $\bm{1} = (1,\ldots,1)$ denotes the\/ $n$-dimensional vector with identical entries\/ $1$. In particular,\/ $\bm{\ell}_{m}^\intercal = \bm{\ell}_{m-1}^\intercal M$.
\end{fact}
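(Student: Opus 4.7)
The plan is to recognise $M$ as the linear map induced by $\vartheta$ on Abelianisations, so that the fact reduces to a computation of column/row sums under iteration. The key observation is that the $j$-th column of $M$, by definition $(M_{ij})_{i=1}^n = (\abs{\vartheta(a_j)}_{a_i})_{i=1}^n$, is exactly the Abelianisation vector $\bm\Phi(\vartheta(a_j))$, which is well-defined because $\vartheta$ is semi-compatible.

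First I would verify that for any word $u \in \mc A^+$, every $w \in \vartheta(u)$ has the common Abelianisation $\bm\Phi(\vartheta(u)) = M\ts \bm\Phi(u)$. This uses only (i) the additivity $\bm\Phi(xy) = \bm\Phi(x) + \bm\Phi(y)$ of Abelianisation with respect to concatenation, and (ii) the fact that each $\vartheta(u_k)$ has a well-defined Abelianisation equal to the $u_k$-th column of $M$. Consequently, by induction on $m$, the set $\vartheta^m(a_j)$ has a well-defined Abelianisation given by $\bm\Phi(\vartheta^m(a_j)) = M^m \bm\Phi(a_j) = M^m \bm e_j$, where $\bm e_j$ is the $j$-th standard basis vector.

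Second, since the total length of any word is the sum of its Abelianisation entries, one has $\abs{w}_{(\ell)} = \bm 1^\intercal \bm\Phi(w)$ for any word $w$. Applying this to $w \in \vartheta^m(a_j)$ yields
\[
\ell_{m,j} \, = \, \abs{\vartheta^m(a_j)}_{(\ell)} \, = \, \bm 1^\intercal \bm\Phi(\vartheta^m(a_j)) \, = \, \bm 1^\intercal M^m \bm e_j,
\]
which is precisely the $j$-th entry of the row vector $\bm 1^\intercal M^m$. Ranging over $j$ gives $\bm\ell_m^\intercal = \bm 1^\intercal M^m$, and the recursive relation $\bm\ell_m^\intercal = \bm\ell_{m-1}^\intercal M$ is an immediate consequence.

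There is no real obstacle here; the only subtlety is bookkeeping the well-definedness of the Abelianisation on $\vartheta^m(a_j)$, which is guaranteed by the semi-compatibility assumption combined with the additivity of $\bm\Phi$ under concatenation. Once that is in place, the fact is nothing more than the statement that $M$ represents $\vartheta$ on the Abelianised level, iterated $m$ times and then summed into a total length via $\bm 1^\intercal$.
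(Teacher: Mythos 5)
Your proof is correct and is exactly the standard argument the paper leaves implicit: the paper gives no proof at all, merely asserting that the fact ``follows immediately from the definition of the substitution matrix $M$'' with references to \cite{baake} and \cite{queffelec}. Your bookkeeping of the well-definedness of $\bm\Phi$ on $\vartheta^m(a_j)$ via semi-compatibility, and the identification of the columns of $M$ with $\bm\Phi(\vartheta(a_j))$, is precisely the intended reasoning.
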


There are two special cases for the inflation word structure of $\vartheta$ that deserve to be named since they mark the boundary cases in the calculations to come.
\begin{definition}
The random substitution $\vartheta$ is said to satisfy the \emph{identical set condition} if
\begin{equation}
\label{Eq:lower-bound-condition}
u,v \in \vartheta(a_i) \implies \vartheta^m(u) = \vartheta^m(v),
\end{equation}
for all\/ $1 \leqslant i \leqslant n$ and $m \in \mathbb{N}$. It is said to satisfy the \emph{disjoint set condition} if
\begin{equation}
\label{Eq:upper-bound-condition}
u, v \in \vartheta(a_i), u\neq v \implies \vartheta^m(u) \cap \vartheta^m(v) = \varnothing,
\end{equation}
for all\/ $1 \leqslant i \leqslant n$ and $m \in \mathbb{N}$.
\end{definition}

\begin{lemma}
\label{Lemma:q-bounds}
For every\/ $m \in \mathbb{N}$, we have
\[
\bm{q}_m^\intercal M \, \leqslant \, \bm{q}_{m+1}^\intercal 
\, \leqslant \, \bm{q}_m^\intercal M + \bm{q}_1^\intercal,
\]
where the inequalities are to be understood element-wise. The lower bound is an equality if and only if \eqref{Eq:lower-bound-condition} holds and the upper bound is an equality if and only if
\eqref{Eq:upper-bound-condition} holds for all $1\leqslant i \leqslant n$.
\end{lemma}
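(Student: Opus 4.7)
The plan is to decompose
\[
\vartheta^{m+1}(a_i) \,=\, \bigcup_{u \in \vartheta(a_i)} \vartheta^m(u)
\]
and to control both the cardinality of each summand $\vartheta^m(u)$ and the overlap structure of the union, invoking semi-compatibility at both the outer and the inner level of iteration.

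The first step I would carry out is an auxiliary multiplicative identity
\[
\log \# \vartheta^m(u) \,=\, \bm{q}_m^\intercal \bm{\Phi}(u), \qquad u \in \mc A^+, \; m \in \mathbb{N},
\]
equivalently $\#\vartheta^m(u) = \prod_{j} (\#\vartheta^m(a_j))^{\abs{u}_{a_j}}$. To prove it, I would observe that $\vartheta^m(u)$ is the set-concatenation $\vartheta^m(u_1)\cdots\vartheta^m(u_{\abs{u}})$, and that by semi-compatibility each factor $\vartheta^m(u_r)$ contains only words of one common length $\ell_{m,j_r}$. Hence the split points between factors in any element of the concatenated set are prescribed, and the $\abs{u}$-fold concatenation of fixed-length word-sets is a genuine Cartesian product on cardinalities. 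This unique-factorisation observation is the only conceptually non-routine step and I would expect it to be the main obstacle.

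Given the identity, semi-compatibility applied to $\vartheta(a_i)$ forces $\bm{\Phi}(u)$ to coincide with the $i$-th column of $M$ for every $u \in \vartheta(a_i)$, so $\#\vartheta^m(u) = \me^{(\bm{q}_m^\intercal M)_i}$ is independent of $u$. The union in the decomposition thus consists of $\#\vartheta(a_i) = \me^{q_{1,i}}$ sets of common cardinality $\me^{(\bm{q}_m^\intercal M)_i}$, and the elementary bounds
\[
\max_k \# A_k \,\leqslant\, \# \bigcup_k A_k \,\leqslant\, \sum_k \# A_k
\]
together with a logarithm yield the two claimed element-wise inequalities at once.

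For the equality cases no further work is needed. Since all $\vartheta^m(u)$ with $u \in \vartheta(a_i)$ have the same cardinality, the lower bound holds with equality in the $i$-th coordinate precisely when these sets all collapse to a single set, which is exactly \eqref{Eq:lower-bound-condition}; and the upper bound holds with equality in the $i$-th coordinate precisely when the union is pairwise disjoint, which is exactly \eqref{Eq:upper-bound-condition}. Conjoining over $i$ (and $m$) gives the stated equivalences.
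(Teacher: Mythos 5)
Your proposal is correct and follows essentially the same route as the paper: the decomposition $\vartheta^{m+1}(a_i)=\bigcup_{u\in\vartheta(a_i)}\vartheta^m(u)$, the multiplicative identity $\#\vartheta^m(u)=\prod_j(\#\vartheta^m(a_j))^{\abs{u}_{a_j}}$ justified by the fixed lengths of the factors, the observation that semi-compatibility makes this common value equal to $\me^{(\bm q_m^\intercal M)_i}$ independently of $u$, and the elementary union bounds with their equality cases. The only difference is presentational: you isolate the injectivity of the concatenation map (prescribed split points) as an explicit auxiliary step, which the paper passes over in one clause.
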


\begin{proof}
Let $m \in \mathbb{N}$ and $1 \leqslant i \leqslant n$. Then,
\begin{align}
\label{Eq:set-iteration}
\vartheta^{m+1}(a_i) \, = \, \vartheta^m(\vartheta(a_i)) 
\, = \, \bigcup_{u \in \vartheta(a_i)} \vartheta^m(u). 
\end{align}
For the moment, fix an arbitrary $u \in \vartheta(a_i)$ and suppose $\abs{u} = r$. Then, by definition, $\vartheta^m(u) = \vartheta^m(u_1)\cdots\vartheta^m(u_r)$ and since all the words in $\vartheta^m(u_k)$, for a fixed $1 \leqslant k \leqslant r$, have the same length, we find
\[
\# \vartheta^m(u) \, = \, \prod_{k=1}^r \# \vartheta^m(u_k) 
\, = \, \prod_{j=1}^n (\# \vartheta^m(a_j))^{\abs{u}_{a_j}}.
\] 
Due to the semi-compatibility condition,
$\abs{u}_{a_j} = \abs{\vartheta(a_i)}_{a_j} = M_{ji}$, for all $1 \leqslant j \leqslant n$. Thus,
\[
\# \vartheta^m(u) \, = \, \prod_{j=1}^n (\# \vartheta^m(a_j))^{M_{ji}}.
\]
Note that this is independent of the choice of $u \in \vartheta(a_i)$.
Taking cardinalities in \eqref{Eq:set-iteration} therefore yields
\[
\prod_{j=1}^n (\# \vartheta^m(a_j))^{M_{ji}} 
\, \leqslant \, \# \vartheta^{m+1}(a_i) 
\, \leqslant \, (\# \vartheta(a_i)) \prod_{j=1}^n (\# \vartheta^m(a_j))^{M_{ji}} ,
\]
where the lower bound is an equality if and only if all the sets in the union in \eqref{Eq:set-iteration} coincide and the upper bound is an equality if and only if the union is disjoint. Taking the logarithm gives the desired relation.
\end{proof}

\begin{remark}
More generally, we can show that for $1\leqslant k \leqslant m$,
\[
\bm q_k^{\intercal} M^{m-k} \, \leqslant \, \bm q_m^{\intercal} 
\, \leqslant \, \bm q_k^{\intercal} M^{m-k} + \bm q^{\intercal}_{m-k}.
\]
This follows by splitting $\vartheta^m(a_i) = \vartheta^k(\vartheta^{m-k}(a_i))$ in place of \eqref{Eq:set-iteration} and then following the same steps as in the proof above. It also leads to slightly different, although related, conditions for the realization of the upper or lower bound. \exend
\end{remark}

\section{Main results}
\label{Sec:main}
Our strategy to prove that the inflation word entropy is well defined and coincides with the topological entropy is to establish a sequence of lower and upper bounds for both quantities which eventually narrow down the set of possible values to a single point.

The first step in this direction is the following result.

\begin{prop}
\label{Prop:inflation-entropy-bounds}
The upper and lower inflation word entropy are bounded by
\begin{equation}
\frac{1}{\lambda}\bm{q}_1^\intercal \bm R
 \, \leqslant \, \underline{s}_i^I \, \leqslant \, \overline{s}_i^I 
 \, \leqslant \, \frac{1}{\lambda-1}\bm{q}_1^\intercal \bm R,
\end{equation}
for all\/ $1 \leqslant i \leqslant n$. The lower bound is an equality if the identical set condition is satisfied and the upper bound is an equality if the disjoint set condition is satisfied. For the lower bound, we additionally have that
\begin{equation}
\label{Eq:lower-bound-sequence}
\frac{1}{\lambda^r}\bm{q}_r^\intercal \bm R \leqslant \underline{s}_i^I ,
\end{equation}
for all\/ $r \in \mathbb{N}$, where the lower bound is a monotonically increasing function in\/ $r$.
\end{prop}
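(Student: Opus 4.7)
The plan is to iterate the two-sided recursion from Lemma~\ref{Lemma:q-bounds} and then pass to asymptotics using the Perron--Frobenius theorem for the primitive matrix $M$. First I would iterate the lower bound $\bm q_{m+1}^\intercal \geqslant \bm q_m^\intercal M$ to obtain $\bm q_m^\intercal \geqslant \bm q_r^\intercal M^{m-r}$ for all $m \geqslant r$, and iterate the upper bound $\bm q_{m+1}^\intercal \leqslant \bm q_m^\intercal M + \bm q_1^\intercal$ to obtain $\bm q_m^\intercal \leqslant \bm q_1^\intercal \sum_{k=0}^{m-1} M^k$. Both follow by a straightforward induction on $m$.

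Next I would use primitivity of $M$ together with the normalisation $\bm L^\intercal \bm R = \norm{\bm R}_1 = 1$ to invoke the Perron--Frobenius asymptotics $\lambda^{-m} M^m \to \bm R \bm L^\intercal$ as $m \to \infty$. In view of Fact~\ref{Fact:l-iteration}, this yields $\ell_{m,i} = (\bm 1^\intercal M^m)_i \sim \lambda^m L_i$, and more generally $(\bm v^\intercal M^m)_i \sim \lambda^m (\bm v^\intercal \bm R)\, L_i$ for any fixed nonnegative row vector $\bm v^\intercal$. Dividing the iterated lower bound by $\ell_{m,i}$ and letting $m \to \infty$ then gives $\underline{s}_i^I \geqslant \lambda^{-r} \bm q_r^\intercal \bm R$, which is exactly \eqref{Eq:lower-bound-sequence}; the case $r = 1$ produces the claimed lower bound. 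Dividing the iterated upper bound by $\ell_{m,i}$ and evaluating the geometric sum $\sum_{k=0}^{m-1} \lambda^k = (\lambda^m - 1)/(\lambda - 1)$ yields $\overline{s}_i^I \leqslant (\lambda - 1)^{-1} \bm q_1^\intercal \bm R$.

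Monotonicity of $r \mapsto \lambda^{-r} \bm q_r^\intercal \bm R$ is immediate from the same lower bound of Lemma~\ref{Lemma:q-bounds}: using that $\bm R$ is a right PF eigenvector,
\[
\bm q_{r+1}^\intercal \bm R \;\geqslant\; \bm q_r^\intercal M \bm R \;=\; \lambda \, \bm q_r^\intercal \bm R.
\]
The equality assertions come from the corresponding equality cases of Lemma~\ref{Lemma:q-bounds}. Under the identical set condition, the iteration collapses to $\bm q_m^\intercal = \bm q_1^\intercal M^{m-1}$, whose ratio with $\ell_{m,i}$ tends to $\lambda^{-1} \bm q_1^\intercal \bm R$. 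Under the disjoint set condition, iteration yields $\bm q_m^\intercal = \bm q_1^\intercal \sum_{k=0}^{m-1} M^k$, and the same asymptotic argument gives equality in the upper bound.

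The only subtle step is the second one: one must verify that the factor $L_i > 0$ (guaranteed by primitivity) genuinely cancels in the ratios of the form $(\bm v^\intercal M^{m-r})_i / (\bm 1^\intercal M^m)_i$, so that the limits do exist and are independent of $i$ at this level. Once the Perron--Frobenius rank-one asymptotics are invoked with sufficient care, the rest is a direct mechanical consequence of the iterated form of Lemma~\ref{Lemma:q-bounds}.
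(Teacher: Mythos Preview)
Your proposal is correct and follows essentially the same route as the paper: iterate the two-sided recursion of Lemma~\ref{Lemma:q-bounds}, divide by $\ell_{m,i}$, and pass to the limit via the Perron--Frobenius asymptotics $\lambda^{-m}M^m \to \bm R \bm L^\intercal$ and $\lambda^{-m}\ell_{m,i} \to L_i$, with the $L_i$ cancelling in the ratio. The only cosmetic difference is that, for the upper bound, the paper rewrites $\lambda^{-m}\sum_{k=0}^{m}M^k = \sum_{j=0}^{m}\lambda^{-j}\,\lambda^{-(m-j)}M^{m-j}$ and lets each factor converge separately, whereas you phrase it through the scalar geometric sum $\sum_{k=0}^{m-1}\lambda^k$; both arrive at $(1-\lambda^{-1})^{-1}\bm R\bm L^\intercal$ after the PF limit.
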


\begin{proof}
This is basically a direct consequence of Lemma~\ref{Lemma:q-bounds} by an iterative application. More precisely, for $m \geqslant 1$, we find, focusing first on the lower bound,
\[
q_{m+1,i} \, \geqslant \, \left( \bm q_r^\intercal M^{m+1-r} \right)_i
\]
for every $1 \leqslant i \leqslant n$ and $m \in \mathbb{N}$ that is larger than a fixed $r \in \mathbb{N}$. Dividing both sides by $\ell_{m+1,i}$ and taking the $\liminf$ yields
\[
\underline{s}_i^I \, \geqslant \, \liminf_{m \rightarrow \infty} \frac{1}{\lambda^r} \frac{\lambda^{m+1}}{\ell_{m+1,i}} \left( \bm q_r^\intercal \frac{M^{m+1-r}}{\lambda^{m+1-r}} \right)_i 
\, = \, \frac{1}{\lambda^r} (L_i)^{-1} \bm q_r^\intercal \bm R L_i 
\, = \, \frac{1}{\lambda^r}\bm{q}_r^\intercal \bm R.
\]
The penultimate step follows from standard PF theory and the assumption that $M$ is primitive, implying
\[
\lim_{m \rightarrow \infty} \frac{1}{\lambda^m} M^m 
\, = \, \bm R \bm L^\intercal
\]
and 
\begin{equation} \label{Eq:asymptotic-length}
\lim_{m \rightarrow \infty} \frac{1}{\lambda^m} \ell_{m,i}
\, = \, \lim_{m \rightarrow \infty} \frac{1}{\lambda^m} \left( \bm 1^\intercal M^m \right)_i 
\, = \, \bm 1^\intercal \bm R L_i = L_i,
\end{equation}
where we have used Lemma~\ref{Fact:l-iteration} and the normalisation condition on the right eigenvector. In order to show monotonicity, note that 
\[
\frac{1}{\lambda^{r+1}} \bm q_{r+1}^\intercal \bm R
\, \geqslant \, \frac{1}{\lambda^{r+1}} \bm q_{r}^\intercal M \bm R \, = \, \frac{1}{\lambda^r} \bm q_r^\intercal \bm R.
\]

For the upper bound, we proceed similarly. First,
\[
q_{m+1,i} \leqslant \left( \bm q_1^\intercal \sum_{k=0}^{m} M^k \right)_i,
\]
such that 
\[
\overline{s}_i^I \leqslant \limsup_{m \rightarrow \infty} \frac{1}{\lambda} \frac{\lambda^{m+1}}{\ell_{m+1,i}} \left( \bm q_1^\intercal \frac{1}{\lambda^m} \sum_{k=0}^{m} M^k \right)_i
 = \frac{1}{\lambda} \left(1-\frac{1}{\lambda} \right)^{-1} \bm q_1^\intercal \bm R = \frac{1}{\lambda - 1} \bm q_1^\intercal \bm R.
\]
Note that, in the second step, we have made use of PF theory once more to conclude that
\[
\limsup_{m \rightarrow \infty} \frac{1}{\lambda^m} \sum_{k=0}^m M^k 
\, = \, \limsup_{m \rightarrow \infty}  \sum_{j=0}^m \frac{1}{\lambda^{j}} \frac{M^{m-j}}{\lambda^{m-j}}
\, = \,  \sum_{j = 0}^{\infty} \frac{1}{\lambda^{j}} \bm R \bm L^\intercal
 = \left( 1 - \frac{1}{\lambda} \right) ^{-1} \bm R \bm L^\intercal.
\]
The claim on the sufficient condition for the realization of the lower or upper bound is immediate from Lemma~\ref{Lemma:q-bounds} and the above calculation.
\end{proof}

As we remarked earlier, we clearly have $\overline{s}_i^I \leqslant s$. Our next step will be to bound $s$ by the same upper bound that was given in Proposition~\ref{Prop:inflation-entropy-bounds} using an independent argument.
\begin{prop}
\label{Prop:entropy-upper-bound}
The topological entropy associated with $\vartheta$ satisfies
\[
s \, \leqslant \, \frac{1}{\lambda-1} \bm q_1^\intercal \bm R.
\]
\end{prop}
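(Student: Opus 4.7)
The plan is to bound $\#\mathcal{L}_\ell$ via a desubstitution argument combined with the iterated form of Lemma~\ref{Lemma:q-bounds}, choosing the substitution level $m = m(\ell)$ in an intermediate regime where $\lambda^m \to \infty$ yet $\lambda^m = o(\ell)$. The desubstitution lemma asserts that for every $w \in \mathcal{L}_\ell$ and every $m \in \mathbb{N}$, there exist a legal word $u = u_1\cdots u_k$ and some $v \in \vartheta^m(u)$ with $w \triangleleft v$, chosen so that $w$'s first (resp.\ last) letter lies in the $\vartheta^m(u_1)$- (resp.\ $\vartheta^m(u_k)$-)block of $v$. Minimality of $u$ forces $k \leq K(m,\ell) := \lceil \ell/\min_i\ell_{m,i}\rceil + 2$ and $|\vartheta^m(u)| \leq \ell + 2\max_i\ell_{m,i}$. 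Together with the semi-compatibility identity $\#\vartheta^m(u) = \prod_i(\#\vartheta^m(a_i))^{|u|_{a_i}}$ already used in the proof of Lemma~\ref{Lemma:q-bounds}, this yields the counting bound
\[
\#\mathcal{L}_\ell \,\leq\, \sum_{k=1}^{K(m,\ell)}\sum_{u\in\mathcal{L}_k} \#\vartheta^m(u) \cdot \bigl(|\vartheta^m(u)| - \ell + 1\bigr)_+ .
\]

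To control the factor $\#\vartheta^m(u)$, I will iterate the upper bound of Lemma~\ref{Lemma:q-bounds} to obtain $\bm q_m^\intercal \leq \bm q_1^\intercal \sum_{j=0}^{m-1} M^j$. The same Perron--Frobenius computation used in the proof of Proposition~\ref{Prop:inflation-entropy-bounds} then gives $q_{m,i}/\ell_{m,i} \leq \bm q_1^\intercal \bm R/(\lambda-1) + o(1)$ as $m\to\infty$, uniformly in $i \in \{1,\ldots,n\}$. Since for every legal word $u$,
\[
\frac{\log\#\vartheta^m(u)}{|\vartheta^m(u)|} \,=\, \frac{\sum_i q_{m,i}\,|u|_{a_i}}{\sum_i \ell_{m,i}\,|u|_{a_i}}
\]
is a convex combination of the ratios $q_{m,i}/\ell_{m,i}$ with nonnegative weights $\ell_{m,i}|u|_{a_i}$, the same bound carries over to this ratio, uniformly in $u \in \mathcal{L}$.

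Choosing $m = m(\ell)$ so that $\lambda^m \asymp \sqrt{\ell}$ (any intermediate rate $\ell^\alpha$ with $0<\alpha<1$ would do), one has $\max_i \ell_{m,i} = O(\sqrt{\ell})$, so that each minimal $u$ from the desubstitution lemma satisfies $|\vartheta^m(u)| = \ell(1+o(1))$ and the uniform rate bound gives $\#\vartheta^m(u) \leq \exp(c\ell + o(\ell))$, where $c := \bm q_1^\intercal \bm R/(\lambda-1)$. Since $K(m,\ell) = O(\sqrt{\ell})$, both the number of relevant $u$'s (bounded by $n^{K(m,\ell)+1}$) and the positions factor $|\vartheta^m(u)| - \ell + 1 = O(\sqrt{\ell})$ contribute only $e^{o(\ell)}$. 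Substituting into the counting inequality yields $\#\mathcal{L}_\ell \leq e^{c\ell + o(\ell)}$, whence $s \leq c = \bm q_1^\intercal \bm R/(\lambda-1)$. The delicate point is the calibration of $m(\ell)$: the uniform rate becomes exact only in the limit $m\to\infty$, while the boundary contributions in the desubstitution lemma grow with $m$; the intermediate regime $\lambda^m \asymp \sqrt{\ell}$ is precisely tuned to balance these two effects.
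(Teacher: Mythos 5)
Your argument is correct, and it reaches the bound by a genuinely different route than the paper. The paper desubstitutes only \emph{one} level: every $w\in\mc L_N$ sits inside some $v'\in\vartheta(u)$ with $u$ legal of length about $N/\lambda$, the count $\#\vartheta(u)=\prod_i(\#\vartheta(a_i))^{\abs{u}_{a_i}}$ is controlled via the uniform letter frequency proposition ($\abs{u}_{a_i}\approx R_i\abs{u}$), and the number of admissible preimages $u$ is bounded by $\#\mc L_{\lfloor N/\lambda^-_\varepsilon\rfloor}$ itself; this yields the self-referential inequality $s\leqslant\frac{1}{\lambda}s+\frac{1}{\lambda}\bm q_1^\intercal\bm R$, and the factor $(\lambda-1)^{-1}$ appears upon solving for $s$ (which uses that the limit defining $s$ exists and is finite). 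You instead desubstitute $m(\ell)\to\infty$ levels with $\lambda^{m}\asymp\sqrt{\ell}$, so the preimage $u$ has length only $O(\sqrt{\ell})$ and the trivial bound $n^{O(\sqrt{\ell})}=e^{o(\ell)}$ on the number of preimages suffices --- no bootstrap is needed. The factor $(\lambda-1)^{-1}$ then comes from the geometric series $\lambda^{-m}\sum_{j=0}^{m-1}M^{j}\to(\lambda-1)^{-1}\bm R\bm L^\intercal$ in the iterated form of Lemma~\ref{Lemma:q-bounds}, i.e.\ from the same Perron--Frobenius computation the paper uses for the upper bound on $\overline{s}^I_i$ in Proposition~\ref{Prop:inflation-entropy-bounds}. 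A further genuine simplification is your convexity observation: since $\log\#\vartheta^m(u)/\abs{\vartheta^m(u)}_{(\ell)}$ is a weighted average of the ratios $q_{m,i}/\ell_{m,i}$, it is bounded by their maximum \emph{regardless of the letter statistics of} $u$, so you can dispense with the uniform letter frequency proposition entirely; all you need is $\ell_{m,i}\sim\lambda^m L_i$ with $L_i>0$, which is pure linear algebra (Fact~\ref{Fact:l-iteration} plus primitivity). The price is the calibration of $m(\ell)$, which you have handled correctly: any rate $\lambda^m\asymp\ell^{\alpha}$ with $0<\alpha<1$ keeps both the boundary overhang $O(\lambda^m)$ and the preimage count $n^{O(\ell/\lambda^m)}$ subexponential, while still letting the uniform bound $q_{m,i}/\ell_{m,i}\leqslant\frac{1}{\lambda-1}\bm q_1^\intercal\bm R+o(1)$ take effect.
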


We proceed by a number of steps. First, we observe that semi-compatibility still guarantees the existence of uniform letter frequencies. Conceptionally, this will be at the heart of the proof of Proposition~\ref{Prop:entropy-upper-bound}.

\begin{prop}
The language\/ $\mc L$, corresponding to\/ $\vartheta$, exhibits uniform existence of letter frequencies in the following sense. For all\/ $\varepsilon > 0$ there is a length\/ $\ell$ such that for all\/ $u \in \mc L$ with\/ $\abs{u} \geqslant \ell$, we have
\begin{equation}
\label{Eq:letter-frequencies}
\left\vert\frac{\abs{u}_{a_i}}{\abs{u}} - R_i \right\vert 
\, < \, \varepsilon.
\end{equation}
\end{prop}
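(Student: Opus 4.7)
The plan is to exploit semi-compatibility, which forces all level-$m$ inflation words over a given letter to share an Abelianisation: I would first verify the claim for inflation words themselves via Perron--Frobenius, and then show that every sufficiently long legal word is, up to a boundary correction of negligible relative length, a concatenation of such inflation words.

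First, by Fact~\ref{Fact:l-iteration} and semi-compatibility, any $v \in \vartheta^m(a_i)$ satisfies $\abs{v}_{a_j} = (M^m)_{ji}$ and $\abs{v} = \ell_{m,i}$. Primitivity yields $\lambda^{-m} M^m \to \bm R \bm L^\intercal$, so $(M^m)_{ji}/\ell_{m,i} \to R_j L_i / L_i = R_j$ as $m \to \infty$, uniformly in $i, j$ since the index set is finite. Given $\varepsilon > 0$, I would fix $m$ so large that every level-$m$ inflation word has each letter frequency within $\varepsilon/3$ of the corresponding $R_j$, and set $\ell_{m,*} := \max_i \ell_{m,i}$.

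Next, for any $u \in \mc L$, the definition of $\mc L$ provides an embedding $u \triangleleft v \in \vartheta^N(a_*)$ for some $N \in \mathbb{N}$ and $a_* \in \mc A$. Primitivity allows me to enlarge $N$ at will: since every letter appears as a subword of some $\vartheta^k(b)$, any inflation embeds as a factor inside an inflation at a strictly higher level; iterating, I may assume $N \geq m$. Writing $v \in \vartheta^m(w)$ for some $w = w_1 \cdots w_K \in \vartheta^{N-m}(a_*)$ yields a factorisation $v = v_1 \cdots v_K$ with $v_p \in \vartheta^m(w_p)$, so each factor has length at most $\ell_{m,*}$. Consequently $u$ decomposes as $u = \alpha \cdot W \cdot \beta$, where $\alpha$ is a suffix of the factor containing the left endpoint of $u$, $\beta$ is a prefix of the factor containing its right endpoint, and $W$ is the concatenation of all intermediate factors. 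Hence $\abs{\alpha}, \abs{\beta} \leqslant \ell_{m,*}$ and $\abs{W} \geqslant \abs{u} - 2\ell_{m,*}$.

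Finally, for $\abs{u}$ exceeding a threshold $\ell$ chosen as a suitable multiple of $\ell_{m,*}/\varepsilon$, the boundary ratios $\abs{\alpha}/\abs{u}, \abs{\beta}/\abs{u}$ and the deficit $1 - \abs{W}/\abs{u}$ are all at most $\varepsilon/3$. The middle ratio $\abs{W}_{a_j}/\abs{W}$ lies within $\varepsilon/3$ of $R_j$ as a convex combination of inflation word frequencies. Splitting $\abs{u}_{a_j}/\abs{u} = (\abs{\alpha}_{a_j} + \abs{W}_{a_j} + \abs{\beta}_{a_j})/\abs{u}$ and combining these bounds (using $R_j \leqslant 1$ to control the product $(R_j - \varepsilon/3)(1 - \varepsilon/3)$ from below) yields \eqref{Eq:letter-frequencies}. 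The only genuinely delicate point is the embedding step achieving $N \geqslant m$, which relies on primitivity in the geometric sense that every letter appears inside some inflation; the remaining estimates are elementary Perron--Frobenius bookkeeping similar to the proof of Proposition~\ref{Prop:inflation-entropy-bounds}.
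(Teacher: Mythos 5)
Your argument is correct and is essentially the standard decomposition argument (long legal word $=$ boundary pieces $+$ concatenation of level-$m$ inflation words, whose letter frequencies are controlled by $\lambda^{-m}M^m \to \bm R \bm L^\intercal$) that the paper's sketch delegates to the proof of \cite[Thm.~5.6]{queffelec}; you have simply written out in full what the paper cites, including the correct use of semi-compatibility to fix the Abelianisation of each $v_p \in \vartheta^m(w_p)$. The one step you flag as delicate --- enlarging $N$ so that $N \geqslant m$ --- is indeed the right thing to justify via primitivity, and your justification works.
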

\begin{proof}[Sketch of proof]
This is a well-known fact for primitive deterministic substitutions. The reason that it still holds for primitive semi-compatible random substitutions is that this is a property that basically relies on the Abelianisations of the inflation words only, which are fixed by the substitution matrix $M$. More specifically, compare the proof of \cite[Thm.~5.6]{queffelec}. This carries over to our situation almost verbatim, if restricted to \emph{letters} instead of general subwords (note that the construction no longer carries over for general subwords since this would involve induced substitutions which are no longer semi-compatible in the random case). 
\end{proof}

With this tool at hand, we are in the position to show that for large enough words, applying the substitution expands the words by a factor $\lambda$, up to a small deviation.

\begin{lemma}
\label{Lemma:superset-for-inflation-words}
Let\/ $I_N = \vartheta(\mc L) \cap \mc L_N$. For all\/ $\varepsilon >0$, there is an\/ $N_0 \in \mathbb{N}$ such that, for all\/ $N \geqslant N_0$,
\begin{equation}
\label{Eq:superset-for-inflation-words}
I_N \, \subset \, \bigcup_{m = \ceil{N/(\lambda + \varepsilon n K)}}^{ \floor{N/(\lambda - \varepsilon n K)}} \vartheta(\mc L_m) ,
\end{equation}
where\/ $K= \max_{1 \leqslant i \leqslant n} \ell_{1,i}$ and\/ $n = \# \mc A$, assuming\/ $\varepsilon$ is small enough that\/ $\lambda > \varepsilon n K$.
\end{lemma}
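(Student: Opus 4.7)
The plan is to pin down, for each candidate $w \in I_N$, the length $m = \abs{u}$ of any preimage $u \in \mc L$ with $w \in \vartheta(u)$, by exploiting uniform letter frequencies. First I would observe that semi-compatibility forces $\abs{\vartheta(u)}_{(\ell)}$ to depend only on the Abelianisation $\Phi(u)$: concretely,
\[
N \, = \, \abs{w} \, = \, \sum_{i=1}^n \abs{u}_{a_i}\ts \ell_{1,i}.
\]
Since $\ell_{1,i} \leqslant K$ for every $i$, this immediately yields the crude bound $m \geqslant N/K$, so that $m$ becomes large once $N$ does; by choosing $N_0$ sufficiently large I can arrange that the uniform letter-frequency estimate \eqref{Eq:letter-frequencies} applies to $u$ whenever $N \geqslant N_0$.

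Next, I would substitute $\abs{u}_{a_i} = m R_i + e_i$ with $\abs{e_i} < \varepsilon m$ into the length identity above. The key algebraic input is
\[
\sum_{i=1}^n R_i \ts \ell_{1,i} \, = \, \bm{\ell}_1^\intercal \bm R \, = \, \bm 1^\intercal M \bm R \, = \, \lambda,
\]
which combines Fact~\ref{Fact:l-iteration} with the normalisation $\bm 1^\intercal \bm R = 1$. This recasts the length formula as $N = \lambda m + \delta$ with $\abs{\delta} \leqslant \varepsilon n K m$, and I would then rearrange the resulting two-sided inequality
\[
m (\lambda - \varepsilon n K) \, \leqslant \, N \, \leqslant \, m (\lambda + \varepsilon n K)
\]
to obtain exactly the range of integers $m$ indexing the union in \eqref{Eq:superset-for-inflation-words} (this is also where the hypothesis $\lambda > \varepsilon n K$ comes in). Because $u \in \mc L_m$ automatically, this will complete the proof of the inclusion $w \in \vartheta(\mc L_m)$.

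The only subtle point is ensuring that the frequency estimate \eqref{Eq:letter-frequencies} may legitimately be applied to $u$, since it requires the word length to exceed a threshold depending on $\varepsilon$; the preliminary inequality $m \geqslant N/K$ is precisely designed to convert a lower bound on $N$ into one on $m$, so this will not be a serious obstacle. Aside from this step, the argument amounts to linear-algebraic bookkeeping with the Perron--Frobenius data, made available by semi-compatibility, which collapses $\abs{\vartheta(u)}_{(\ell)}$ to a quantity depending only on $\Phi(u)$.
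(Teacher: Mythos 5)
Your proposal is correct and follows essentially the same route as the paper: apply the uniform letter-frequency estimate to a preimage $u$ of $w\in I_N$, use $\bm\ell_1^\intercal\bm R=\lambda$ to squeeze $N=\abs{\vartheta(u)}_{(\ell)}$ between $m(\lambda\mp\varepsilon nK)$, and invert. Your explicit a priori bound $m\geqslant N/K$, used to guarantee that the frequency estimate applies to $u$, is a clean (and if anything slightly more careful) way of choosing $N_0$ than the paper's choice $N_0=\ceil{\ell(\lambda+\varepsilon nK)}$.
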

\begin{proof}
Let $\varepsilon >0$, choose $\ell \in \mathbb{N}$ such that \eqref{Eq:letter-frequencies} holds and let $m \geqslant \ell$. Then, for $u \in \mc L_m$, we find that $\abs{u}_{a_i} \in ( (R_i - \varepsilon)m, (R_i + \varepsilon)m)$ and thereby
\[
\abs{\vartheta(u)}_{(\ell)} \, = \, \sum_{i=1}^n \abs{u}_{a_i} \abs{\vartheta(a_i)}_{(\ell)} 
\, < \, m \sum_{i=1}^n R_i \ell_{1,i} + \varepsilon m \sum_{i=1}^n \ell_{1,i}
\, \leqslant \, m \bm \ell_1^\intercal \bm R + \varepsilon m n K
\, = \, m (\lambda + \varepsilon n K),
\]
where in the last step we have made use of $\bm \ell_1^\intercal \bm R = \mathbf{1}^\intercal M \bm R = \lambda$. Analogously,
\[
\abs{\vartheta(u)}_{(\ell)} \, > \, m (\lambda - \varepsilon n K).
\]
Now choose $N \geqslant N_0 = \ceil{\ell (\lambda + \epsilon n K)}$ and $w \in I_N$ with $w \in \vartheta(u)$ and $\abs{u} = m$. Then, by the bounds above 
\[
 m(\lambda - \varepsilon n K) \, < \, \abs{\vartheta(u)}_{(\ell)}  \, = \, N  \, < \, m (\lambda + \varepsilon n K),
\]
or, equivalently
\[
\frac{N}{\lambda + \varepsilon n K} \, < \, m \, < \, \frac{N}{\lambda - \varepsilon n K},
\]
which establishes the claim.
\end{proof}

\begin{coro}
\label{Corr:inflation-word-cardinality}
In the situation of Lemma~\textnormal{\ref{Lemma:superset-for-inflation-words}},  let\/ $\lambda^{\pm}_\varepsilon = \lambda \pm \varepsilon n K$. Then, for\/ $N \geqslant N_0$,
\[
\# I_N \, \leqslant \, N \left(\frac{1}{\lambda^-_\varepsilon} - \frac{1}{\lambda^+_\varepsilon} \right) \bigl(\# \mc L_{ \floor{ N/\lambda^-_\epsilon} } \bigr) \prod_{i = 1}^{n} \left( \# \vartheta(a_i) \right)^{ (R_i + \varepsilon) N/\lambda^-_\varepsilon}.
\]
\end{coro}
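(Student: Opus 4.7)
The plan is to combine the set-theoretic inclusion from Lemma~\ref{Lemma:superset-for-inflation-words} with a uniform per-word bound on $\# \vartheta(u)$ derived from the letter-frequency estimate \eqref{Eq:letter-frequencies}. Three ingredients contribute to the three factors on the right-hand side: the number of admissible values of $m$, the cardinality of $\mc L_m$ for such $m$, and a uniform bound on $\# \vartheta(u)$ coming from the asymptotic frequency of each letter.

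First, I would observe that the index set in Lemma~\ref{Lemma:superset-for-inflation-words} is $[\ceil{N/\lambda^+_\varepsilon}, \floor{N/\lambda^-_\varepsilon}] \cap \N$, which contains at most
\[
\floor{N/\lambda^-_\varepsilon} - \ceil{N/\lambda^+_\varepsilon} + 1 \, \leqslant \, N \bigl(1/\lambda^-_\varepsilon - 1/\lambda^+_\varepsilon\bigr) + 1
\]
elements, and the stray $+1$ can be absorbed by enlarging $N_0$. For each such $m$, I would use subadditivity of $\#$ under unions together with the multiplicative identity
\[
\# \vartheta(u) \, = \, \prod_{i=1}^n (\# \vartheta(a_i))^{\abs{u}_{a_i}},
\]
already established inside the proof of Lemma~\ref{Lemma:q-bounds}. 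Provided $m$ exceeds the threshold from the letter-frequency proposition, every $u \in \mc L_m$ satisfies $\abs{u}_{a_i} < (R_i + \varepsilon) m$; and since $m \leqslant N/\lambda^-_\varepsilon$, this gives the uniform estimate
\[
\# \vartheta(u) \, \leqslant \, \prod_{i=1}^n (\# \vartheta(a_i))^{(R_i + \varepsilon) N/\lambda^-_\varepsilon}.
\]
Summing over $u \in \mc L_m$ and replacing $\# \mc L_m$ by $\# \mc L_{\floor{N/\lambda^-_\varepsilon}}$ via the standard monotonicity of $\ell \mapsto \# \mc L_\ell$ for subshifts (every legal word extends to the right in any element of the nonempty shift space $\mathbb{X}$) yields the desired bound after multiplying by the number of contributing indices.

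The only nontrivial point is a consistency check on thresholds: the letter-frequency estimate \eqref{Eq:letter-frequencies} requires $m \geqslant \ell$, and this needs to hold uniformly across the entire index range, not merely at the upper endpoint $\floor{N/\lambda^-_\varepsilon}$. Since the smallest admissible $m$ is $\ceil{N/\lambda^+_\varepsilon}$, enlarging $N_0$ so that $\ceil{N_0/\lambda^+_\varepsilon} \geqslant \ell$ (on top of the $N_0$ already required by Lemma~\ref{Lemma:superset-for-inflation-words}) takes care of this uniformly, and the rest of the argument is essentially bookkeeping.
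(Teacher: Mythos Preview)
Your argument is correct and mirrors the paper's proof: both take cardinalities in the inclusion of Lemma~\ref{Lemma:superset-for-inflation-words}, bound $\#\vartheta(u)$ via the multiplicative formula $\#\vartheta(u)=\prod_i(\#\vartheta(a_i))^{\abs{u}_{a_i}}$ together with the letter-frequency estimate, and then replace the sum over $m$ by the number of terms times the maximal summand using monotonicity of $m\mapsto\#\mc L_m$. The paper glosses over both the threshold check and the stray $+1$ in the index count that you flag; note, though, that this $+1$ cannot literally be absorbed by enlarging $N_0$ as you suggest---it is simply harmless in the application to Proposition~\ref{Prop:entropy-upper-bound}, where only the exponential order matters.
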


\begin{proof}
Taking the cardinality of \eqref{Eq:superset-for-inflation-words}, we find
\[
\# I_N \, \leqslant \, \sum_{m = \ceil{N/\lambda^+_\varepsilon}}^{\floor{N/\lambda^-_\varepsilon}} \# \vartheta(\mc L_m) 
\, \leqslant \, \sum_{m = \ceil{N/\lambda^+_\varepsilon}}^{\floor{N/\lambda^-_\varepsilon}} \sum_{u \in \mc L_m} \# \vartheta(u) .
\]
By the construction in the proof above, we have for $u \in \mc L_m$,  with $N/\lambda^+_\varepsilon \leqslant m \leqslant N/\lambda^-_\varepsilon$,
\[
\# \vartheta(u) \, = \, \# \vartheta(u_1) \cdots \# \vartheta(u_m) 
\, = \, \prod_{i=1}^n \left( \# \vartheta(a_i) \right)^{\abs{u}_{a_i}} 
\, \leqslant \, \prod_{i=1}^n \left( \# \vartheta(a_i) \right)^{(R_i + \epsilon) m},
\]
which is monotonically increasing in $m$. Since also $\# \mc L_m$ is increasing in $m$, the claim follows.
\end{proof}

Finally, let us prove the upper bound for the topological entropy.

\begin{proof}[Proof of Proposition~\textnormal{\ref{Prop:entropy-upper-bound}}]
Suppose $v \in \mc L_N$. Then, there exists a $u \in \mc L$ such that $v \triangleleft v' \in \vartheta(u)$ for some $v' \in \mc L$ with $N \leqslant \abs{v'} \leqslant N+2(K-1)$, with $K$ as in Lemma~\ref{Lemma:superset-for-inflation-words}. To account for the different possible positions of $v$ within $v'$, let us define the sets
$\mc F_p^L(N) = \{ w_{[p,p+N-1]} \mid w \in I_L \}$, where we suppose that $N \leqslant L$ and consider $p$ within the range $1\leqslant p \leqslant L-N+1$. Obviously,
\[
\mc L_N \, \subset \, \bigcup_{L=N}^{N+2K} \bigcup_{p=1}^{L-N+1} \mc F_p^L(N).
\]
From the definition, it is clear that $\# \mc F_p^L(N) \leqslant \# I_L$ for any $N$, $p$ and $L$ in their corresponding ranges. In particular, this bound is independent of $p$. Thereby,
\begin{align*}
\# \mc L_N & \, \leqslant \, \sum_{L=N}^{N+2K} (L-N+1) (\# I_L) \\
& \, \leqslant \,  (2K +1)^2 (N+2K) \left(\frac{1}{\lambda^-_\varepsilon} - \frac{1}{\lambda^+_\varepsilon} \right) \bigl( \# \mc L_{ \floor{ (N+2K)/\lambda^-_\epsilon} } \bigr) \prod_{i = 1}^{n} \left( \# \vartheta(a_i) \right)^{ (R_i + \varepsilon) (N+2K)/\lambda^-_\varepsilon},
\end{align*}
where in the last step we have made use of Corollary~\ref{Corr:inflation-word-cardinality}, together with the monotonicity of the functions occurring therein. We note that the first three factors of the right hand side in the last expression exhibit only linear growth in $N$ and therefore vanish under an application of $\frac{1}{N}\log(\cdot)$, in the limit of large $N$. Consequently,
\begin{align*}
s & \, = \, \lim_{N \rightarrow \infty} \frac{1}{N} \log \left( \# \mc L_N \right) \\
&\, \leqslant \, \lim_{N \rightarrow \infty}\frac{1}{N } \log \bigl( \# \mc L_{ \floor{ (N+2K)/\lambda^-_\epsilon} } \bigr) + \frac{N+2K}{N \lambda^-_\varepsilon} \sum_{i=1}^{n} (R_i + \varepsilon) \log (\# \vartheta(a_i))
\\ & \, = \, \frac{1}{\lambda^-_\varepsilon} s + \frac{1}{\lambda^-_\varepsilon} \bm q_1^\intercal \bm R + \frac{\varepsilon}{\lambda^-_{\varepsilon}} \sum_{i=1}^{n} q_{1,i} \, \xrightarrow{ \ts \varepsilon \rightarrow 0 \ts } \, \frac{1}{\lambda} s + \frac{1}{\lambda} \bm q_1^\intercal \bm R.
\end{align*}
This yields
\[
s \, \leqslant \, \left( 1 - \frac{1}{\lambda} \right) ^{-1} \frac{1}{\lambda} \bm q_1^\intercal \bm R 
\, = \, \frac{1}{\lambda -1} \bm q_1^\intercal \bm R,
\]
as desired.
\end{proof}

We summarise our main findings in the following result.
\begin{theorem}
\label{Thm:entropy=inflation-entropy}
Suppose\/ $\vartheta$ is a primitive semi-compatible random substitution on the alphabet\/ $\mc A = \{a_1,\ldots,a_n\}$ with entropy\/ $s$ and lower (upper) inflation word entropies\/ $\underline{s}_i^I$ ($\overline{s}_i^I$). Then, for\/ $1\leqslant i \leqslant n$ and any\/ $m \in \mathbb{N}$, we have
\begin{equation}
\label{Eq:Thm-entropy-bounds}
\frac{1}{\lambda^m} \bm q_m^\intercal \bm R
\, \leqslant \, \underline{s}_i^I 
\, \leqslant \, \overline{s}_i^I
\, \leqslant \, s \, \leqslant \, \frac{1}{\lambda^m -1} \bm q_m^\intercal \bm R.
\end{equation}
In particular, the inflation word entropy\/ \[s^I = \lim_{m \rightarrow \infty} \frac{1}{\ell_{m,i}} \log(\#\vartheta^m(a_i))\]
is well-defined, independent of\/ $i$ and equals the topological entropy\/ $s$. Both can be calculated as
\begin{equation}
\label{Eq:Thm-entropy-calc}
s \, = \, s^I \, = \, \lim_{m \rightarrow \infty} \frac{1}{\lambda^m} \bm q_m^\intercal \bm R \, = \, \sup_{m \in \mathbb{N}} \frac{1}{\lambda^m} \bm q_m^\intercal \bm R.
\end{equation}
\end{theorem}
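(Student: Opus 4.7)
The plan is to establish the chain of inequalities in \eqref{Eq:Thm-entropy-bounds} by recombining earlier results, with the main work being a single application of Proposition~\ref{Prop:entropy-upper-bound} to the iterate $\vartheta^m$. The middle inequality $\underline{s}_i^I \leq \overline{s}_i^I$ holds by definition; the leftmost is exactly \eqref{Eq:lower-bound-sequence} from Proposition~\ref{Prop:inflation-entropy-bounds}; and $\overline{s}_i^I \leq s$ was already noted in the discussion following that proposition, since $\vartheta^m(a_i) \subset \mc L_{\ell_{m,i}}$ together with $\ell_{m,i} \to \infty$ forces $\limsup q_{m,i}/\ell_{m,i} \leq s$.

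For the rightmost inequality $s \leq \frac{1}{\lambda^m-1} \bm q_m^\intercal \bm R$, my plan is to apply Proposition~\ref{Prop:entropy-upper-bound} to $\vartheta^m$ in place of $\vartheta$. I first verify that $\vartheta^m$ is again a primitive semi-compatible random substitution with substitution matrix $M^m$, PF eigenvalue $\lambda^m$, right PF eigenvector $\bm R$, and with its level-$1$ $\bm q$-vector equal to the original $\bm q_m$. Semi-compatibility is immediate from the fact, noted after the definition, that $\abs{\vartheta^m(v)}_a$ is well-defined, and primitivity of $M^m$ is standard.

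The subtle step is verifying that $\vartheta^m$ has the same associated topological entropy as $\vartheta$, equivalently that the languages agree. Only $\mc L \subset \mc L^{(m)}$ requires work: given $v \triangleleft u \in \vartheta^N(a)$, I would choose $p_0$ such that $M^p$ is strictly positive for $p \geq p_0$, fix any $a' \in \mc A$, and pick $k$ so that $p := mk - N \geq p_0$. Then some $w \in \vartheta^p(a')$ contains the letter $a$, so writing $w = w_1 a w_2$ and selecting any $u_1 \in \vartheta^N(w_1)$ and $u_2 \in \vartheta^N(w_2)$ produces $u_1 u u_2 \in \vartheta^{mk}(a')$, placing $v$ in $\mc L^{(m)}$. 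This is the conceptual heart of the argument, though a routine consequence of primitivity; given $s = s^{(m)}$, the rightmost inequality follows at once from Proposition~\ref{Prop:entropy-upper-bound} applied to $\vartheta^m$.

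Given \eqref{Eq:Thm-entropy-bounds}, the identities \eqref{Eq:Thm-entropy-calc} follow from a sandwich argument. The lower bounds $\frac{1}{\lambda^m}\bm q_m^\intercal \bm R$ are monotonically increasing in $m$ by Proposition~\ref{Prop:inflation-entropy-bounds}, while the gap
\[
\frac{1}{\lambda^m-1}\bm q_m^\intercal \bm R - \frac{1}{\lambda^m}\bm q_m^\intercal \bm R \, = \, \frac{\bm q_m^\intercal \bm R}{\lambda^m(\lambda^m-1)}
\]
tends to $0$, since iterating Lemma~\ref{Lemma:q-bounds} yields $\bm q_m^\intercal \bm R \leq \frac{\lambda^m-1}{\lambda-1}\bm q_1^\intercal \bm R$. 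Hence all quantities appearing in \eqref{Eq:Thm-entropy-bounds} share a common limit, which by monotonicity also equals the supremum. The main conceptual obstacle is the language-equality step; otherwise the theorem is essentially a reorganisation of Propositions~\ref{Prop:inflation-entropy-bounds} and \ref{Prop:entropy-upper-bound}.
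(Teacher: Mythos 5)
Your proposal is correct and follows essentially the same route as the paper: the key step in both is to apply Proposition~\ref{Prop:entropy-upper-bound} to the power $\vartheta^m$ (using $q_{1,i}(\vartheta^m)=q_{m,i}(\vartheta)$ and the invariance of the language, hence of $s$, under taking powers), and then to sandwich all quantities between the converging lower and upper bounds. You merely spell out two points the paper leaves implicit --- the primitivity argument showing $\mc L = \mc L^{(m)}$, and the estimate $\bm q_m^\intercal \bm R \leqslant \frac{\lambda^m-1}{\lambda-1}\bm q_1^\intercal \bm R$ forcing the gap to vanish --- both of which are correct.
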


\begin{proof}
The only statement in \eqref{Eq:Thm-entropy-bounds}, which is not immediate from Proposition~\ref{Prop:inflation-entropy-bounds} and Proposition~\ref{Prop:entropy-upper-bound}, is maybe the upper bound for $s$ for arbitrary values of $m \in \mathbb{N}$. However, this follows readily from Proposition~\ref{Prop:entropy-upper-bound}, applied to the substitution $\vartheta^m$ (which is itself clearly primitive and semi-compatible) and the observation that $q_{m,i}(\vartheta) = \log (\# \vartheta^m(a_i)) = q_{1,i}(\vartheta^m)$, combined with the fact that the topological entropy $s$ is identical for any power of $\vartheta$. 
\\ This establishes that $s^I$ is well-defined via
\[
1 \, \leqslant \, \frac{\overline{s}_i^I}{\underline{s}_i^I} 
\, \leqslant \, \frac{\lambda^m}{\lambda^m -1} \, \xrightarrow{\ts m \rightarrow \infty \ts} \, 1.
\]
The fact that $s = s^I$ and the formula for its calculation in \eqref{Eq:Thm-entropy-calc} follow similarly.
\end{proof}

\section{Examples and Applications}
\label{Sec:examples}
The procedures presented in the last section might raise the hope to find a closed form expression for the entropy of any primitive semi-compatible random substitutions. In general, the difficulty lies in quantifying the overlaps of sets of the form $\vartheta^m(u)$, $u \in \vartheta(a_i)$, in case they are non-trivial and thereby lie strictly in between \eqref{Eq:lower-bound-condition} and \eqref{Eq:upper-bound-condition}. Usually, one works out inductive relations for these intersections which are particular for the substitution at hand---compare \cite{bss, gmaster, nilsson-RNMS, nilsson-sqfib, nilsson-rfib}. However, if one of the limiting cases holds, we \emph{do} get a closed formula for the topological entropy.

\begin{coro}
The identical set condition implies
\[
s \, = \, \frac{1}{\lambda} \bm q_1^\intercal \bm R,
\]
while the disjoint set condition is sufficient for
\[
s  \, = \, \frac{1}{\lambda - 1} \bm q_1^\intercal \bm R.
\]
\end{coro}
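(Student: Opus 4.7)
The plan is to read off each formula as a one-line consequence of Theorem~\ref{Thm:entropy=inflation-entropy} after exploiting the sharpness clause of Lemma~\ref{Lemma:q-bounds}. The common idea is that each of the two set conditions promotes the matrix recursion of Lemma~\ref{Lemma:q-bounds} to an exact equality for every $m$, so that $\bm q_m$ admits a closed form in terms of $\bm q_1$ and powers of $M$. The Perron--Frobenius identity $M^k \bm R = \lambda^k \bm R$ then reduces the outer members of \eqref{Eq:Thm-entropy-bounds} to scalar expressions that sandwich $s$.

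For the identical set condition, Lemma~\ref{Lemma:q-bounds} yields $\bm q_{m+1}^\intercal = \bm q_m^\intercal M$ for every $m$, and hence $\bm q_m^\intercal = \bm q_1^\intercal M^{m-1}$. Pairing with $\bm R$ and substituting into the two outer terms of \eqref{Eq:Thm-entropy-bounds} gives
\[
\frac{1}{\lambda^m}\, \bm q_m^\intercal \bm R \,=\, \frac{1}{\lambda}\, \bm q_1^\intercal \bm R \qquad \text{and} \qquad \frac{1}{\lambda^m - 1}\, \bm q_m^\intercal \bm R \,=\, \frac{\lambda^{m-1}}{\lambda^m - 1}\, \bm q_1^\intercal \bm R \,\xrightarrow{\,m\to\infty\,}\, \frac{1}{\lambda}\, \bm q_1^\intercal \bm R,
\]
so both sides of \eqref{Eq:Thm-entropy-bounds} collapse to $\tfrac{1}{\lambda}\, \bm q_1^\intercal \bm R$, forcing $s$ to equal this value.

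For the disjoint set condition, Lemma~\ref{Lemma:q-bounds} instead yields $\bm q_{m+1}^\intercal = \bm q_m^\intercal M + \bm q_1^\intercal$, and iterating produces $\bm q_m^\intercal = \bm q_1^\intercal \sum_{k=0}^{m-1} M^k$. Applying this to $\bm R$ and summing the resulting geometric series $\sum_{k=0}^{m-1} \lambda^k$ gives $\bm q_m^\intercal \bm R = \bm q_1^\intercal \bm R \cdot \frac{\lambda^m - 1}{\lambda - 1}$, whereupon the $(\lambda^m - 1)^{-1}$-normalised upper bound in \eqref{Eq:Thm-entropy-bounds} already equals $\tfrac{1}{\lambda - 1}\, \bm q_1^\intercal \bm R$ for every $m$, while the $\lambda^{-m}$-normalised lower bound converges to the same value as $m \to \infty$.

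No genuine obstacle is anticipated: the substantive content has already been packaged into Proposition~\ref{Prop:entropy-upper-bound} and Theorem~\ref{Thm:entropy=inflation-entropy}, and what remains is the elementary Perron--Frobenius evaluation of the telescoped recursion under each of the two boundary conditions.
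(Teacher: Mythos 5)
Your proposal is correct and amounts to the same argument as the paper's: the paper cites the equality clauses of Proposition~\ref{Prop:inflation-entropy-bounds} together with $s=s^I$ from Theorem~\ref{Thm:entropy=inflation-entropy}, and those equality clauses are themselves obtained exactly by the telescoping of Lemma~\ref{Lemma:q-bounds} that you carry out explicitly. Your computation of $\bm q_m^\intercal \bm R$ in closed form and the squeeze via \eqref{Eq:Thm-entropy-bounds} is a correct, slightly more self-contained unwinding of the same chain of results.
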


\begin{proof}
This is an immediate consequence of Proposition~\ref{Prop:inflation-entropy-bounds} and Theorem~\ref{Thm:entropy=inflation-entropy}.
\end{proof}

Clearly, a sufficient criterion for the identical set condition is that $\vartheta(a) = \vartheta(b)$ for all $a,b \in \mc A$. One such substitution was considered in \cite{rs}.

\begin{example}
Let $\vartheta \colon a \mapsto \{ab,ba\}, \, b \mapsto \{ab,ba\}$ be a random substitution with $\lambda = 2$ and $\bm R = (1/2,1/2)^\intercal$. It was shown in \cite{rs} that the subshift constructed from this random substitution is in fact a sofic shift. Since $\vartheta(a) = \vartheta(b)$, we find
\[
s \, = \, \frac{1}{2} (\log(2),\log(2)) \bm R \, = \, \frac{1}{2} \log(2)
\] 
for the topological entropy. \exend
\end{example}

In the case of constant length random substitutions, there is an easy sufficient criterion to ensure the disjoint set condition.

\begin{coro}
\label{Corr:constant-length}
In the situation above, assume in addition that\/ $\vartheta$ is a constant-length substitution. That is, there exists a length\/ $k \in \mathbb{N}$ such that\/ $\abs{\vartheta(a)}_{(\ell)} = k$, for all\/ $a \in \mc A$. If, in addition,\/ $\vartheta$ satisfies the \emph{disjoint inflation set condition}
\begin{equation}
\vartheta(a) \cap \vartheta(b) \, = \, \varnothing,
\end{equation}
for all\/ $a,b \in \mc A$ with\/ $a \neq b$, then the entropy is given by the upper bound
\[
s \, = \, \frac{1}{\lambda -1} \bm q_1^\intercal \bm R.
\]
\end{coro}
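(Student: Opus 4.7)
The plan is to reduce the corollary to the previously established fact (Theorem~\ref{Thm:entropy=inflation-entropy} combined with Proposition~\ref{Prop:inflation-entropy-bounds}) that the disjoint set condition \eqref{Eq:upper-bound-condition} forces $s = \frac{1}{\lambda-1} \bm q_1^\intercal \bm R$. So the whole task is to show that, under constant length $k$ together with the disjoint inflation set condition, the disjoint set condition \eqref{Eq:upper-bound-condition} holds.

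First I would record the key structural observation enabled by constant length: if $u = u_1\cdots u_r \in \mc A^+$, then every word $w \in \vartheta^m(u)$ has length $r\cdot k^m$ and admits a \emph{unique} factorisation $w = w^{(1)} \cdots w^{(r)}$ with $\abs{w^{(j)}} = k^m$ and $w^{(j)} \in \vartheta^m(u_j)$. This is because the block lengths are forced by $\abs{\vartheta^m(a)}_{(\ell)} = k^m$ for every $a \in \mc A$.

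With this in hand, I would first prove the auxiliary claim that for every $m \in \mathbb{N}$ and every pair $a \neq b$ in $\mc A$, one has $\vartheta^m(a) \cap \vartheta^m(b) = \varnothing$, by induction on $m$. The base case $m = 1$ is exactly the disjoint inflation set condition. For the inductive step, suppose the claim holds for some $m$ and assume, towards a contradiction, that there is $w \in \vartheta^{m+1}(a) \cap \vartheta^{m+1}(b)$. Writing $\vartheta^{m+1} = \vartheta^m \circ \vartheta$, one finds $u \in \vartheta(a)$ and $v \in \vartheta(b)$ with $w \in \vartheta^m(u) \cap \vartheta^m(v)$. Since $a \neq b$, the base case gives $u \neq v$, so $u_j \neq v_j$ for some position $j$. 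Applying the unique factorisation observation to both $\vartheta^m(u)$ and $\vartheta^m(v)$, the $j$-th block $w^{(j)}$ of $w$ lies in $\vartheta^m(u_j) \cap \vartheta^m(v_j)$, contradicting the inductive hypothesis.

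From the auxiliary claim, the disjoint set condition is immediate by essentially the same block decomposition argument: if $u, v \in \vartheta(a_i)$ differ at some position $j$, then any $w \in \vartheta^m(u) \cap \vartheta^m(v)$ would force $w^{(j)} \in \vartheta^m(u_j) \cap \vartheta^m(v_j) = \varnothing$, a contradiction. Hence \eqref{Eq:upper-bound-condition} holds for all $m$ and the claimed formula $s = \frac{1}{\lambda - 1} \bm q_1^\intercal \bm R$ follows from Proposition~\ref{Prop:inflation-entropy-bounds} and Theorem~\ref{Thm:entropy=inflation-entropy}. There is no serious obstacle here; the only subtlety is making the unique factorisation argument precise, which is entirely due to the constant-length hypothesis and would fail in general.
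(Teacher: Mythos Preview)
Your proposal is correct and follows essentially the same approach as the paper: reduce to the disjoint set condition, prove $\vartheta^m(a)\cap\vartheta^m(b)=\varnothing$ by induction via the constant-length block decomposition, then use the same block argument for $u,v\in\vartheta(a_i)$. The only cosmetic difference is that in the inductive step you factor $\vartheta^{m+1}=\vartheta^m\circ\vartheta$ (using the base case to get $u\neq v$ and the inductive hypothesis for the contradiction), whereas the paper factors it as $\vartheta\circ\vartheta^{m_0}$ (using the inductive hypothesis to get $u\neq v$ and the base case for the contradiction); both orderings work equally well.
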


\begin{proof}
It clearly suffices to show that the disjoint inflation set condition implies the disjoint set condition in the constant-length setting. 
\\Let $a,b \in \mc A$, with $a\neq b$. First, it follows by induction that $\vartheta^m(a) \cap \vartheta^m(b) = \varnothing$ for all $m \in \mathbb{N}$: Suppose it is true for all $m$ up to $m_0 \in \mathbb{N}$. With the aim of establishing a contradiction, suppose further that $w \in \vartheta^{m_0+1}(a) \cap \vartheta^{m_0+1}(b)$. Then, there is $u \in \vartheta^{m_0}(a)$ and $v \in \vartheta^{m_0}(b)$ such that $w \in \vartheta(u) \cap \vartheta(v)$. Since $\vartheta^{m_0}(a) \cap \vartheta^{m_0}(b) = \varnothing$ by the induction assumption, $u \neq v$, so there exists a position $j$ such that $u_j \neq v_j$ (recall that $\abs{u} = \abs{v}$ because $\vartheta$ is constant-length). Thus, $w \in \vartheta(u) \cap \vartheta(v)$ implies that
\[
w_{[(j-1)k + 1,j k]} \in \vartheta(u_j) \cap \vartheta(v_j)
\, = \, \varnothing
\]
by the constant-length condition, giving the desired contradiction.
\\Next, let $1 \leqslant i \leqslant n$ and $u,v \in \vartheta(a_i)$, with $u \neq v$. Then, $u_j \neq v_j$ for some $1 \leqslant j \leqslant \abs{u} = \abs{v}$. Suppose there is $w \in \vartheta^m(u) \cap \vartheta^m(v)$, for some $m \in \mathbb{N}$. Since each word in $\vartheta^m(a)$, $a \in \mc A$ has length $k^m$, this would imply
\[
w_{[(j-1)k^m + 1, jk^m]} \in \vartheta^m(u_j) \cap \vartheta^m(v_j) 
\, = \, \varnothing,
\]
leading to a contradiction. Consequently $\vartheta^m(u) \cap \vartheta^m(v) = \varnothing$.
\end{proof}

Let us apply our results to a number of `test cases' for which the topological entropy has been calculated in previous work. In particular, we will look at random variants of the well-known Fibonacci, Period Doubling and Thue--Morse substitutions.

\begin{example}[Random Period Doubling]
$\vartheta_{RPD} \colon a \mapsto \{ab,ba\} ,\, b \mapsto \{aa \}$, with data $\lambda = 2$, $\bm R = (2/3,1/3)^\intercal$ and $\bm q_1 = (\log(2),0)^\intercal$. As this is a constant-length substitution satisfying the disjoint inflation set condition, we can apply Corollary~\ref{Corr:constant-length} and obtain
\[
s_{RPD} \, = \, \frac{1}{\lambda-1} \bm q_1^\intercal \bm R 
\, = \, \frac{2}{3} \log(2).
\]
This coincides with the value computed in \cite{bss}. \exend
\end{example}

\begin{example}[Random Thue--Morse]
$\vartheta_{RTM} \colon a \mapsto \{ab,ba\},\, b \mapsto \{ba\} $, with data $\lambda = 2$, $\bm R = (1/2,1/2)^\intercal$, $\bm q_1^\intercal = (\log(2),0)$. This substitution is also constant-length, but no longer satisfies the disjoint inflation set condition. Indeed,
\[
0.1733 \, \approx \, \frac{1}{4} \log(2)
 \, = \, \frac{1}{\lambda} \bm q_1^\intercal \bm R \, < \, s_{RTM} 
 \, < \, \frac{1}{\lambda - 1} \bm q_1^\intercal \bm R
\, = \, \frac{1}{2} \log(2) \approx 0.3466. 
\] 
The numerical value of $s_{RTM}$ was computed in \cite{gmaster} to be \[ s_{RTM} \, \approx \, 0.253917.\]
We can, of course, improve our bounds by going to higher powers. For this, it is useful to establish inductive relations between the sets of inflation words. Concretely, we observe that $\vartheta^m(b) \subset \vartheta^m(a)$ for all $m \in \mathbb{N}$, which yields 
\begin{align*}
\# \vartheta^{m+1}(a) &\, = \, \# \left( \vartheta^m(a) \vartheta^m(b) \cup  \vartheta^m(b) \vartheta^m(a) \right) 
 \, = \, 2 (\# \vartheta^m(a)) (\#\vartheta^m(b)) - (\# \vartheta^m(b))^2,
\end{align*}
making use of the fact that $\vartheta^m(a) \vartheta^m(b) \cap  \vartheta^m(b) \vartheta^m(a) = \vartheta^m(b) \vartheta^m(b)$. Also, 
\[
\# \vartheta^{m+1}(b) \, = \, (\# \vartheta^m(b)) (\# \vartheta^m(a)).
\]
This gives a scheme to compute $\bm q_m$ for arbitrarily large numbers $m \in \mathbb{N}$ at relatively low computational cost (as compared to naively counting the cardinalities of inflation word sets). For example, we obtain for $m = 5$,
\[
0.25177 \approx \frac{1}{64} \log(9953280) = \frac{1}{\lambda^5} \bm q_5^\intercal \bm R < s_{RTM} < \frac{1}{\lambda^5-1} \bm q_5^\intercal \bm R = \frac{1}{62} \log(9953280) \approx 0.25989,
\]
reproducing the first two valid digits. \exend
\end{example}

\begin{example}[Random Fibonacci] $\vartheta_{RF}\colon a \mapsto \{ab,ba\},\, b \mapsto \{a\}$, with data $\lambda = \tau$ the golden ration, $\bm R = \frac{1}{1+\tau} (\tau, 1 )^\intercal$, $\bm q_1^\intercal = (\log(2),0)$. Note that, due to the small inflation factor, the convergence rate of the lower and upper bounds to the real value of the entropy, given by
\begin{equation}\label{Eq:s_RF}
s_{RF} \, = \, \sum_{m=2}^{\infty} \frac{\log(m)}{\tau^{m+2}} 
\, \approx \, 0.444399,
\end{equation}
will be relatively poor. The exact value for the entropy was worked out in \cite{gl,moll,nilsson-rfib}. Concretely,
\[
0.265 \, \approx \, \frac{1}{\tau^2} \log(2)
\, = \, \frac{1}{\tau} \bm q_1^\intercal \bm R
\, < \, s_{RF} \, < \, \frac{1}{\tau-1} \bm q_1^\intercal \bm R 
\, = \, \log(2) \approx 0.693.
\]
It was shown in \cite[Prop.~6]{nilsson-rfib} that 
\[
\# \vartheta^{m+1}(b) \, = \, \# \vartheta^{m}(a)
\, = \, (m+1) \prod_{j=2}^{m+1} (m+2-j)^{f_{j -2}},
\]
where $\{f_j\}_{j \in \mathbb{N}}$ denotes the Fibonacci sequence. From this, it is a straightforward calculation to check that any of the formulas for computing $s^I$ that are given in Theorem~\ref{Thm:entropy=inflation-entropy} indeed reproduces the expression for $s_{RF}$ in \eqref{Eq:s_RF}. \exend
\end{example}

With the example of the random Fibonacci substitution we have left the realm of constant length substitutions. Although Corollary~\ref{Corr:constant-length} is no longer applicable in this situation, there are other sufficient criteria to ensure the disjoint set condition. In \cite{rust}, two properties of $\vartheta$ are introduced, called \emph{disjoint images} and \emph{disjoint inflation images}. The \emph{disjoint inflation images} property can be shown to be equivalent to the disjoint set condition, whereas the \emph{disjoint images} property is stronger in general. We can thereby carry over a result  that shows that the disjoint set condition is a somewhat generic feature of a primitive, semi-compatible random substitution.

\begin{fact}\label{Fact:prefix-suffix}
Suppose\/ $\vartheta$ does not satisfy the disjoint set condition \eqref{Eq:upper-bound-condition}. Then, the following two properties hold.
\begin{enumerate}
\item There are letters\/ $a,b \in \mc A$ and\/ $u_a \in \vartheta(a)$, $u_b \in \vartheta(b)$ such that\/ $u_a$ is a prefix of\/ $u_b$.
\item There are letters\/ $a,b \in \mc A$ and\/ $u_a \in \vartheta(a)$, $u_b \in \vartheta(b)$ such that\/ $u_a$ is a suffix of\/ $u_b$.
\end{enumerate}
\end{fact}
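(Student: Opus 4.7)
The plan is to reduce the failure of \eqref{Eq:upper-bound-condition} to a level-$1$ statement and then read off the prefix and suffix relations by aligning two product decompositions of one and the same word at the first (resp.\ last) position of disagreement. Concretely, I first want to produce a word $w$ lying in $\vartheta(p)\cap\vartheta(q)$ for two words $p\neq q$ of equal length; a short combinatorial argument then finishes both~(1) and~(2).

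For the reduction, take the \emph{minimal} $m\geqslant 1$ at which \eqref{Eq:upper-bound-condition} fails, so that $w\in\vartheta^m(u)\cap\vartheta^m(v)$ for some $u\neq v$ in $\vartheta(a_i)$. If $m=1$, set $p:=u$ and $q:=v$. If $m\geqslant 2$, writing $\vartheta^m=\vartheta\circ\vartheta^{m-1}$ gives $p\in\vartheta^{m-1}(u)$ and $q\in\vartheta^{m-1}(v)$ with $w\in\vartheta(p)\cap\vartheta(q)$; minimality of $m$ forces $p\neq q$, because $p=q$ would place $p$ in $\vartheta^{m-1}(u)\cap\vartheta^{m-1}(v)$, contradicting the disjoint set condition at level $m-1$. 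In either case, semi-compatibility ensures $\abs{p}=\abs{q}$, since this common length depends only on $\Phi(u)=\Phi(v)$.

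With $p\neq q$ of equal length and $w\in\vartheta(p)\cap\vartheta(q)$ in hand, let $j$ be the smallest index with $p_j\neq q_j$. Decomposing
\[
w \, = \, w^{(1)}\cdots w^{(\abs{p})} \, = \, \tilde{w}^{(1)}\cdots\tilde{w}^{(\abs{q})},
\qquad w^{(k)}\in\vartheta(p_k),\ \tilde{w}^{(k)}\in\vartheta(q_k),
\]
the factors for $k<j$ have identical lengths $\ell_{1,p_k}=\ell_{1,q_k}$, so both decompositions agree on positions $1$ through $s:=\sum_{k<j}\ell_{1,p_k}$. Consequently $w^{(j)}$ and $\tilde{w}^{(j)}$ both begin at position $s+1$ in $w$, and after possibly swapping the roles of $p$ and $q$ one may assume $\ell_{1,p_j}\leqslant\ell_{1,q_j}$; then $w^{(j)}\in\vartheta(p_j)$ is a prefix of $\tilde{w}^{(j)}\in\vartheta(q_j)$ with $p_j\neq q_j$, which is claim~(1). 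To obtain~(2), repeat the argument with the \emph{largest} index at which $p$ and $q$ differ, so that both decompositions align from the right and one $\vartheta$-image appears as a suffix of another.

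The only delicate point is the reduction step: upgrading a level-$m$ overlap to a level-$1$ overlap with \emph{distinct} underlying words. Minimality of $m$ is precisely what rules out the degenerate case $p=q$, in which one would otherwise be stuck at the same level with no new information; the rest is routine block-alignment, made clean by semi-compatibility.
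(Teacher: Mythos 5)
Your argument is correct, and it is worth noting that the paper itself does not prove this Fact: it only cites \cite[Lem.~14]{rust} for the prefix statement and remarks that the suffix statement follows by the same argument. You have therefore supplied a self-contained proof where the paper gives a reference; the content is essentially the induction-plus-block-alignment argument underlying \cite[Lem.~14]{rust}, recast as a contrapositive with a minimal failing level $m$. Both key steps hold up. The reduction is sound: minimality of $m$ forces $p\neq q$, and semi-compatibility gives $\abs{p}=\abs{q}$ because $\Phi(u)=\Phi(v)$ determines the common Abelianisation, hence the common length, of all words in $\vartheta^{m-1}(u)$ and $\vartheta^{m-1}(v)$. The alignment step is also sound: for $k<j$ the blocks $w^{(k)}$ and $\tilde w^{(k)}$ lie in the same set $\vartheta(p_k)=\vartheta(q_k)$ and so have equal length $\ell_{1,p_k}$, which pins both $w^{(j)}$ and $\tilde w^{(j)}$ to the same starting position in $w$; the analogous right-aligned argument at the last index of disagreement yields the suffix claim. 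One small remark: your construction may return $u_a=u_b$ as words (when $\ell_{1,p_j}=\ell_{1,q_j}$), but it always produces \emph{distinct letters} $a=p_j\neq q_j=b$, which is precisely the non-degenerate form of the statement that the paper uses in its applications (e.g.\ for $\vartheta_{RF^2}$ one verifies that no level-$1$ inflation word of one letter occurs as a suffix of an inflation word of another).
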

The proof for the existence of a prefix was spelt out in \cite[Lem.~14]{rust}. The corresponding result for the suffix follows exactly the same line of argument. Corollary~\ref{Corr:constant-length} is obviously a special case of this result for the constant-length setting.

\begin{example}
Many examples that we have considered so far are actually \emph{compatible} in the sense that all deterministic marginalisations of the random substitution produce the same shift-space $\mathbb{X}$, compare \cite{bss}. We will now turn to an example that violates compatibility but is still semi-compatible. This is a random variant of the square of the deterministic Fibonacci substitution, the entropy of which was treated in \cite{nilsson-sqfib}. It is determined by $\vartheta_{RF^2} \colon a \mapsto \{baa\}, \, b \mapsto \{ab,ba\}$, $\lambda = \tau^2$, $\bm R = \frac{1}{1+\tau} (\tau,1)^\intercal$. 
\\Since there is no level-$1$ inflation word that appears as the suffix of another, we can apply Fact~\ref{Fact:prefix-suffix} to conclude that the disjoint set condition holds and thus,
\[
s \, = \, \frac{1}{\tau^2 -1} \frac{1}{\tau+1} \log(2) 
\, = \, \frac{1}{\tau^3} \log(2),
\]
reproducing the result given in \cite[Thm.~2]{nilsson-sqfib}. This way, we can avoid any of the technical combinatorial estimates presented in \cite{nilsson-sqfib} to work out the entropy.
%This way, we can avoid the bulk of technical combinatorial estimates presented in \cite{nilsson-sqfib} to prove equality of topological entropy and inflation word entropy. 
\exend
\end{example}

Let us now turn to some examples that have not yet been covered by the literature on entropy (to the best of the author's knowledge).

\begin{example}[Random paper folding]
Let $\vartheta_{RPF} \colon a \mapsto \{ab,ba\},\, b \mapsto \{cb,bc\},\, c \mapsto \{ad,da\}$ and $ d \mapsto \{cd,dc\}$ be a random substitution with $\lambda = 2$ and $R_i = 1/4$ for all $1 \leqslant i \leqslant 4$. Clearly, the disjoint set condition for constant-length substitutions applies and we find
$
s = \log(2).
$ \exend
\end{example}

\begin{example}\cite[Ex.~19]{rust}.
We consider the random substitution $\vartheta \colon a \mapsto \{ abbabba, ababbba\}$, $b \mapsto \{a \}$, $\lambda = 4$, $\bm R =\frac{1}{2} (1,1)^\intercal$.
This example was shown to satisfy \emph{global unique recognisability} \cite[Def.~18]{rust}, a property that precludes the existence of periodic points \cite[Prop.~21]{rust}. Since it also implies the disjoint set condition \cite[Prop.~23]{rust}, we can work out the topological entropy to be $s = \frac{1}{6} \log(2)$.  \exend
\end{example}

\section{Further notions related to topological entropy}
\label{Sec:related}
In this section, we will discuss two variants of topological entropy. It turns out that they describe essentially the same quantity although they suggest slightly different interpretations. The first concept is \emph{geometric entropy}, describing the exponential growth rate of the number of admitted patterns as their length is increased. Secondly, we will show that the topological entropy can be constructed from the growth rate of \emph{periodic} elements, provided there is at least one (and thus infinitely many) periodic elements in the subshift $\mathbb{X}$ that is associated with our random substitution.

For a geometric interpretation of words, we choose a vector $\bm \psi = (\psi_1, \ldots, \psi_n)^{\intercal}$, where $\psi_i > 0$ is to be interpreted as the length of the tile associated to $a_i \in \mc A$. Naturally, this can be extended to assigning a geometric length to any given (legal) word in the following way.
\begin{definition}
For $u \in \mc L$, let the \emph{geometric length} of $u$ be defined as $ \psi_u = \bm \psi^\intercal \Phi(u) $. 
\end{definition}

Let $\mc F^G(L)$ denote the set of geometric patterns of a given length $L \in \mathbb{R}$, built from words in $\mc L$. For our purposes (which are of purely combinatorial nature) it suffices to regard  $\mc F^G(L)$ as a set of words with a common geometric length
\[
\mc F^G(L) \, = \, \{ w \in \mc L \mid \psi_w = L \}.
\]
For most values of $L\in \mathbb{R}$, this set will be empty since $\{ \psi_w \mid w \in \mc L \}$ is only a countable set. Also, $\# \mc F^G(L)$ is not necessarily increasing in $L$, even when restricted to this subset of $\mathbb{R}$. There might be values of $L$ that, despite being large, correspond to a rare statistic of relative letter frequencies and are therefore obtained only by few words $w \in \mc L$. We therefore suggest the following notion of geometric entropy.
\[
s^G \, := \, \limsup_{L \rightarrow \infty} \frac{1}{L} \log (\# \mc F^G(L)).
\]

\begin{remark}
An alternative that allows one to keep the $\lim$ instead of the $\limsup$ would be to replace $\# \mc F^G(L)$ by a summation over an appropriate interval of lengths. For example, the quantity
\[
\lim_{L \rightarrow \infty} \frac{1}{L} \log \biggl( \sum_{L' \leqslant L} \# \mc F^G(L') \biggr),
\]
where the sum runs over all obtainable values of $L'$, yields the same value $s^G$. This is also a natural extension of the symbolic setting, since 
\[
s \, = \, \lim_{n \rightarrow \infty} \frac{1}{n} \log \biggl( \sum_{k = 1}^{n} \# \mc L_k  \biggr)
\]
follows easily from the observation $\# \mc L_n \leqslant \sum_{k = 1}^{n} \# \mc L_k \leqslant n \# \mc L_n$. \exend
\end{remark}

Because of the uniform existence of letter frequencies, there is a well-defined average tile length $\bm \psi^\intercal \bm R$ in the limit of large legal words. Conversely, $\varrho := (\bm \psi^\intercal \bm R )^{-1}$ gives the corresponding density of left endpoints of the tiles. Keeping this interpretation in mind, the following result is not surprising

\begin{prop}
For a primitive, semi-compatible random substitution with topological entropy $s$, the geometric entropy is given by\/ $s^G = \varrho s$.
\end{prop}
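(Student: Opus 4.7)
The strategy is to translate between geometric length and symbolic length via the uniform existence of letter frequencies. Since $\psi_u = \bm\psi^\intercal \Phi(u) = \sum_{i=1}^n \abs{u}_{a_i}\, \psi_i$ depends only on the Abelianisation of $u$, the bound \eqref{Eq:letter-frequencies} implies that for every $\varepsilon > 0$ there is an $\ell_0$ such that any $u \in \mc L$ with $\abs{u} \geqslant \ell_0$ satisfies
\[
\left\vert \frac{\psi_u}{\abs{u}} - \bm\psi^\intercal \bm R \right\vert \, < \, \varepsilon \sum_{i=1}^n \psi_i.
\]
Equivalently, with $\varrho = (\bm\psi^\intercal \bm R)^{-1}$, any sufficiently large word of geometric length $L$ has symbolic length inside an interval of the form $[\varrho L(1-\delta),\, \varrho L(1+\delta)]$, where $\delta = \delta(\varepsilon) \to 0$ as $\varepsilon \to 0$; conversely, any $u \in \mc L_N$ with $N$ large has $\psi_u$ inside a proportional interval around $N/\varrho$.

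For the upper bound $s^G \leqslant \varrho s$ I would use the resulting inclusion $\mc F^G(L) \subset \bigcup_{N} \mc L_N$ with $N$ running over an integer interval of length $O(L)$. Combining this with $\# \mc L_N \leqslant \me^{(s+\varepsilon)N}$ (valid for all large $N$) yields $\# \mc F^G(L) \leqslant O(L) \cdot \me^{(s+\varepsilon)\varrho L(1+\delta)}$. Applying $\frac{1}{L}\log(\cdot)$, then $L \to \infty$, then $\varepsilon \to 0$, delivers the bound.

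For the lower bound $s^G \geqslant \varrho s$, I would pick large $N$ with $\# \mc L_N \geqslant \me^{(s-\varepsilon)N}$. Each $u \in \mc L_N$ contributes to $\mc F^G(\psi_u)$, but $\psi_u$ is a function of $\Phi(u)$ alone and therefore attains at most polynomially many values in $N$ (bounded, for instance, by the number of compositions of $N$ into $n$ non-negative parts). By pigeonhole, some $L_N$ in the geometric window around $N/\varrho$ satisfies
\[
\# \mc F^G(L_N) \, \geqslant \, \frac{\# \mc L_N}{\mathrm{poly}(N)} \, \geqslant \, \frac{\me^{(s-\varepsilon)N}}{\mathrm{poly}(N)}.
\]
Since $L_N \leqslant N(1+\delta)/\varrho$, this gives $\frac{1}{L_N}\log \# \mc F^G(L_N) \geqslant \varrho(s-\varepsilon)/(1+\delta) - O(\log N / L_N)$. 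Taking $\limsup$ as $N \to \infty$ and then $\varepsilon,\delta \to 0$ concludes the argument.

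The main obstacle is the mismatch between the symbolic $\lim$ defining $s$ and the geometric $\limsup$ defining $s^G$: the set of admissible geometric lengths is only countable, and $\# \mc F^G(L)$ is neither monotone nor regular in $L$. The pigeonhole step is essential to turn exponential growth of $\# \mc L_N$ (in $N$) into exponential growth of $\# \mc F^G$ at a \emph{single} well-chosen geometric length $L_N$; it works precisely because $\psi_u$ factors through the Abelianisation, so the number of distinct geometric lengths contributed by words of symbolic length $N$ grows only polynomially and is absorbed upon taking $\frac{1}{L_N}\log$.
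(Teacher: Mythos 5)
Your proposal is correct. The upper bound $s^G \leqslant \varrho s$ is essentially the paper's argument: both convert the uniform letter frequencies into a two-sided comparison between $\psi_u$ and $\abs{u}$, deduce the inclusion of $\mc F^G(L)$ in a union of $O(L)$ sets $\mc L_N$ with $N \approx \varrho L$, and let the linear prefactor die under $\frac{1}{L}\log(\cdot)$. The lower bound, however, is genuinely different. The paper exhibits explicit well-populated geometric lengths $L_n = \bm \psi^\intercal \Phi(\vartheta^n(a_i))$: by semi-compatibility every word of $\vartheta^n(a_i)$ has this same geometric length, so $\vartheta^n(a_i) \subset \mc F^G(L_n)$, and the conclusion follows from the already-established identity $s^I = s$ of Theorem~\ref{Thm:entropy=inflation-entropy}. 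You instead take an arbitrary large $N$, note that $\psi_u$ factors through $\Phi(u)$ so that words in $\mc L_N$ realise at most $\binom{N+n-1}{n-1} = O(N^{n-1})$ distinct geometric lengths, and pigeonhole to find one length $L_N$ with $\# \mc F^G(L_N) \geqslant \# \mc L_N / \mathrm{poly}(N)$; the polynomial loss vanishes under $\frac{1}{L_N}\log(\cdot)$. Both are sound. The paper's route is shorter given its existing machinery and showcases semi-compatibility directly; yours is more self-contained (it needs only the existence of the limit defining $s$ and the frequency bound, not $s^I = s$) and more portable, since the pigeonhole step would apply to any subshift with uniform letter frequencies, substitution-generated or not. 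It also explains structurally why the $\limsup$ defining $s^G$ is attained: some Abelianisation class of $\mc L_N$ must carry all but a polynomial factor of the words.
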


\begin{proof}
First, we show $s^G \leqslant \varrho s$. For $\varepsilon > 0$, let $m_0 \in \mathbb{N}$ be such that for all $u \in \mc L$ with $\abs{u} = m \geqslant m_0$, it is 
\begin{equation}
%\label{Eq:uniform-letter-estimate}
\left\vert \frac{\abs{u}_{a_i}}{m} - R_i \right\vert 
\, < \, \varepsilon,
\end{equation}
for all $1 \leqslant i \leqslant n$. Suppose such a $u$ is given. Then, we obtain for the geometric length of $u$,
\[
\psi_u \, = \, \sum_{i=1}^n \abs{u}_{a_i} \psi_i 
\, < \, m \bm \psi^\intercal \bm R + \varepsilon m \sum_{i=1}^n \psi_i 
\, = \, m \left(\bm \psi^\intercal \bm R + \varepsilon \norm{ \bm \psi}_1 \right),
\]
and analogously for the lower bound.
With the notation $\varrho^{\pm}_\varepsilon = (\bm \psi^\intercal \bm R \pm \varepsilon \norm{ \bm \psi }_1)^{-1}$, this implies for $L \geqslant L_0 = m_0/\varrho^+_\varepsilon$,
\[
\mc F^G(L) \, \subset \, \bigcup_{m= \ceil{\varrho^+_\varepsilon L}}^{\floor{\varrho^-_\varepsilon L}} \mc L_m.
\]
Thereby, we find
\begin{align*}
\limsup_{L \rightarrow \infty} \frac{1}{L} \log \left( \# \mc F^G(L) \right) 
& \, \leqslant \, \limsup_{L \rightarrow \infty} \frac{\ceil{\varrho^-_\varepsilon L}}{L} \frac{1}{\ceil{\varrho^-_\varepsilon L}} \log \Bigl( \bigl( \floor{\varrho^-_\varepsilon L} - \ceil{\varrho^+_\varepsilon L} \bigr) \# \mc L_{\floor{\varrho^-_\varepsilon L}} \Bigr)
\\ & \, = \, \varrho^-_\varepsilon s \, \xrightarrow{\ts \varepsilon \rightarrow 0 \ts} \, \varrho s.
\end{align*}
For the opposite inequality, let $a_i \in \mc A$ and consider the sequence $L_n = \bm \psi^\intercal \Phi(\vartheta^n(a_i))$, for $n \in \mathbb{N}$. Clearly, $L_n/\ell_{n,i} \rightarrow \bm \psi^\intercal \bm R$, for $n \to \infty$, by the existence of letter frequencies. Thus,
\[
s^G \, \geqslant \, \limsup_{n \rightarrow \infty} \frac{1}{L_n} \log(\# \mc F^G(L_n)) 
\, \geqslant \, \limsup_{n \rightarrow \infty} \frac{\varrho}{\ell_{n,i}} \log (\# \vartheta^n(a_i)) = \varrho s^I_i = \varrho s,
\]
where we have used $\vartheta^n(a_i) \subset \mc F^G(L_n)$ in the second inequality.
\end{proof}

\begin{remark}
The restriction to semi-compatible random substitutions also enables us to choose \emph{natural tile lengths}, encoded in the common left PF eigenvector $\bm L$. More precisely, taking $\bm \psi = \bm L$ allows us to interpret $\vartheta$ as a (random/multi-valued) geometric inflation rule, similar to the case of deterministic substitutions. In this case, we obtain $s^G = s$. \exend
\end{remark}

Next, we discuss two more interpretations of $s^G$, which show that it can also be interpreted as the \emph{topological} entropy of a properly chosen dynamical system. First, with slight abuse of notation, we define a \emph{roof function} $\psi \colon \mathbb{X} \to \mathbb{R}_{+}$ by $ \psi(x)= \psi_{x^{}_0}$ and set
\[
Y \, = \, \{ (x,s) \mid x \in \mathbb{X}, 0 \leqslant s \leqslant \psi(x) \} \subset \mathbb{X} \times \mathbb{R},
\]
where $\mathbb{R}$ is equipped with the standard topology, $\mathbb{X} \times \mathbb{R}$ with the product topology and $Y$ with the subspace topology. We define an equivalence relation on the space $\mathbb{X} \times \mathbb{R}$ via $(x,s + \psi(x)) \sim (\sigma(x),s)$, where $\sigma$ denotes the left shift on $\mathbb{X}$. For every point  $z \in \mathbb{X} \times \mathbb{R}$, there are at most two points in $Y$ equivalent to $z$, and exactly one in the space $\widehat{Y} := Y/\sim$. Thus, $\widehat{T}_t(x,s) = (x,s+t)$, modulo the equivalence explained above, gives a well-defined map on $\widehat{Y}$, for every $t \in \mathbb{R}$. Thus $\widehat{T} = \{\widehat{T}_t\}$ is a one-parameter transformation group on $\widehat{Y}$.
\begin{definition}
The continuous dynamical system $(\widehat{Y},\widehat{T})$, equipped with the quotient topology, is called the \emph{suspension} or \emph{special flow} of the dynamical system $(\mathbb{X},\sigma)$.
\end{definition}

For general background on special flows, we refer to \cite[Ch.~11]{cfs}.
By the variational principle, the topological entropy of a dynamical system is given by the supremum of the metric entropies over all invariant Borel probability measures, compare \cite{tagi-zade} for the case of continuous group actions. 
Let $\mu$ be any shift-invariant Borel probability measure on $\mathbb{X}$ and $m$ the Lebesgue measure on $\mathbb{R}$. We define 
\[
\widehat{\mu} \, = \, \frac{(\mu \otimes m) \vert_{\widehat{Y}}}{\int_{\mathbb{X}} \psi \, \mathrm{d}\mu},
\]
which is easily checked to be a probability measure on $\widehat{Y}$. Because we assumed $\mu$ to be shift-invariant we can specify the normalization constant even further;
\[
\int_{\mathbb{X}} \psi \, \mathrm{d}\mu 
\, = \, \lim_{n \to \infty} \frac{1}{n} \sum_{k = 0}^{n-1} \int_{\mathbb{X}} \psi \circ \sigma^k \, \mathrm{d} \mu 
\, = \, \bm \psi^\intercal \bm R,
\]
using Lebesgue's dominated convergence theorem and the (uniform) existence of letter frequencies in the last step. The fact that $\widehat{\mu}$ is $\widehat{T}$-invariant is classic \cite[Sec.~11.1]{cfs}. Due to a theorem by Abramov \cite{abramov}, the metric entropies of the systems $(\widehat{Y},\widehat{T},\widehat{\mu})$ and $(\mathbb{X},\sigma,\mu)$ are related via
\begin{equation}\label{Eq:abramov}
h(\widehat{T},\widehat{\mu}) \, = \, \frac{h(\sigma,\mu)}{\int_{\mathbb{X}} \psi \, \mathrm{d}\mu} 
\, = \, \varrho \, h(\sigma,\mu).
\end{equation}
All the ergodic probability measures on $\widehat{Y}$ are of the form $\widehat{\mu}$ for some ergodic probability measure $\mu$ on $\mathbb{X}$ (compare for example \cite{savchenko}). Using the variational principle in conjunction with \eqref{Eq:abramov}, we therefore arrive at the following statement.

\begin{coro}
Let\/ $(\mathbb{X},\sigma)$ be the dynamical system derived from some primitive semi-compatible random substitution\/ $\vartheta$ and\/ $(\widehat{Y}, \widehat{T})$ its suspension with roof function $\psi$ as above. Then, the topological entropy\/ $h(T)$ of\/ $(\widehat{Y}, \widehat{T})$ coincides with the geometric entropy\/ $s^G = \varrho s$. \qed
\end{coro}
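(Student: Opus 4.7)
The plan is to combine the variational principle for the topological entropy of continuous one-parameter group actions with the Abramov relation \eqref{Eq:abramov} that is already in place. By the variational principle (in the form of~\cite{tagi-zade}), one has
\[
h(\widehat{T}) \,=\, \sup_{\widehat{\nu}} h(\widehat{T}, \widehat{\nu}),
\]
where the supremum ranges over all $\widehat{T}$-invariant ergodic Borel probability measures on $\widehat{Y}$. The key structural input, already noted in the text, is the bijective correspondence $\mu \leftrightarrow \widehat{\mu}$ between shift-invariant ergodic probability measures on $\mathbb{X}$ and $\widehat{T}$-invariant ergodic probability measures on $\widehat{Y}$, for which I would simply invoke~\cite{savchenko}.

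The essential observation that makes the suspension argument clean is that, because $\vartheta$ is semi-compatible, the normalization constant $\int_{\mathbb{X}} \psi \, \mathrm{d}\mu = \bm \psi^\intercal \bm R$ does not depend on the particular shift-invariant measure $\mu$. This was established just before \eqref{Eq:abramov} from the uniform existence of letter frequencies, and it implies that the Abramov scaling factor $\varrho = (\bm \psi^\intercal \bm R)^{-1}$ is the same across the entire family $\{\widehat{\mu}\}$. I would then take the supremum over $\mu$ on both sides of \eqref{Eq:abramov} to obtain
\[
h(\widehat{T}) \,=\, \sup_{\mu} h(\widehat{T}, \widehat{\mu}) \,=\, \varrho \, \sup_{\mu} h(\sigma, \mu) \,=\, \varrho \, h(\sigma) \,=\, \varrho \, s,
\]
where in the last two equalities I apply the variational principle to $(\mathbb{X}, \sigma)$ and use $h(\sigma) = s$. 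Combined with the preceding proposition identifying $s^G$ with $\varrho s$, this yields $h(\widehat{T}) = s^G$, as claimed.

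The only potential obstacle is ensuring that the two hypotheses behind these classical results are actually met: the completeness of the Savchenko correspondence (that \emph{every} ergodic $\widehat{T}$-invariant probability measure on $\widehat{Y}$ arises as some $\widehat{\mu}$) and the integrability of $\psi$ required by Abramov's theorem. The latter is immediate, since $\psi$ is bounded with $0 < \min_i \psi_i \leqslant \psi \leqslant \max_i \psi_i$, so $\int_{\mathbb{X}} \psi \, \mathrm{d}\mu < \infty$ for every probability measure $\mu$. The former is a standard feature of suspensions under strictly positive roof functions over compact base spaces, for which I would simply cite~\cite{savchenko} rather than reproduce the argument.
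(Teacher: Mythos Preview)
Your argument is correct and follows precisely the route the paper itself takes: the corollary is stated with an immediate \qed because the preceding paragraph already combines the variational principle (via~\cite{tagi-zade}), Abramov's formula~\eqref{Eq:abramov} with the measure-independent normalization $\int_{\mathbb{X}}\psi\,\mathrm{d}\mu=\bm\psi^\intercal\bm R$, and the Savchenko correspondence for ergodic measures on the suspension. Your write-up simply makes these steps explicit and adds the (harmless) verification that $\psi$ is bounded; there is nothing to correct.
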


\begin{remark}
Another, though closely related, approach starts from a geometric description of a tiling related to some $x \in \mathbb{X}$ via the set of the left endpoints of the tiles. This is given by
\[
G(x) \, := \, \{0\} \cup \{ \psi^{}_{x^{}_{[0,k]}} \mid k \in \mathbb{N}_0 \} \cup \{ - \psi^{}_{x^{}_{[-k,-1]}} \mid k \in \mathbb{N} \},
\]
if we take $0$ as a marker point for the initial position. It is easily seen that $G(x)$ is a Delone set of finite local complexity (compare \cite{baake}) for each $x \in \mathbb{X}$. All the sets $G(x)$ with $x \in \mathbb{X}$ can be embedded into an appropriate topological space of Delone sets $\mathcal{D}$; compare \cite{blr}. We define the geometric subshift associated to $\mathbb{X}$ as
\[
\mathbb{Y} \, = \, \overline{\{ G(x) + t \mid x \in \mathbb{X}, t \in \mathbb{R} \}},
\]
where the closure is taken with respect to the topology on $\mc D$. We have a group of translations $T = \{T_t\}$, with $T_t(G) = G + t$, for $G \in \mathbb{Y}$ and $t \in \mathbb{R}$, acting on $\mathbb{Y}$. It is straightforward to check that $(\mathbb{Y},T)$ and $(\widehat{Y},\widehat{T})$ are conjugate as topological dynamical systems. Their topological entropies therefore coincide. \\Finally, there is also the notion of a \emph{configurational} or \emph{patch counting} entropy corresponding to an arbitrary element with dense orbit in $\mathbb{Y}$, see \cite{lag-pleas}. (An element with dense orbit always exists; compare \cite{rs}.) This concept bears some resemblance to the definition of the geometric entropy $s^G$ given above. As was shown in \cite{blr}, the patch counting entropy coincides with the topological entropy of $\mathbb{Y}$ for the type of systems at hand and is therefore also given by $s^G$. \exend
\end{remark}

Let us now turn our attention to the subsets of periodic words. The class of general random substitutions is very large. It has been shown, for example, that every (topologically transitive) shift of finite type (SFT) can be obtained from an appropriate primitive random substitution \cite{grs}. For such an SFT (even for the more general class of sofic shifts) it is well-known that the topological entropy can be obtained from the growth rate of the number of periodic elements \cite[Thm.~4.3.6]{lind-marcus}. This raises the question whether a similar statement holds for the topological entropy of primitive semi-compatible random substitutions. 
\begin{definition}
Given a language $\mc L$, the \emph{set of periodic words of period $q$} is given by
\[
\mc P(q) \, = \, \{ u \in \mc L \mid \abs{u} = q, \; \text{with} \; u^N \in \mc L, \; \text{for all} \; N \in \mathbb{N}\},
\]
where $u^N = u \cdots u$ denotes the concatenation of $N$ times the word $u$. Note that these sets might be empty.
\end{definition}
Given a random substitution, the existence of periodic words in its language is a subtle problem. An initial investigation was performed in \cite{rust}, giving a number of criteria to exclude the existence of periodic points and an algorithm that checks whether a given word is periodic for a large family of semi-compatible random substitutions. However, there remain many cases that are not decidable by any of those results. The next proposition shows that if we \emph{assume} the existence of periodic words, there are sufficiently many to reproduce the full topological entropy. This is essentially due to the fact that periodic words produce periodic words under the substitution procedure.
\begin{prop}
The topological entropy of a primitive semi-compatible random substitution can be obtained from its sets of periodic words via
\[
s \, = \, \limsup_{q \rightarrow \infty} \frac{1}{q} \log (\# \mc P(q)),
\]
provided that there exists at least one (and thus infinitely many) periodic words.
\end{prop}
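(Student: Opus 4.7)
The upper bound $\limsup_{q\to\infty} \tfrac{1}{q}\log(\#\mc P(q)) \leqslant s$ is immediate from $\mc P(q) \subset \mc L_q$ and the definition of $s$. For the matching lower bound I plan to produce, from a single periodic seed, a sequence of periods $p_m \to \infty$ along which $\#\mc P(p_m)$ grows like $\exp(s\cdot p_m)$ up to sub-exponential factors.

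Fix $u \in \mc P(p_0)$ and set $p_m := \Phi(u)^\intercal \bm\ell_m = \abs{\vartheta^m(u)}_{(\ell)}$. The first observation is that $\vartheta^m$ preserves periodicity: for any $w = w_1 \cdots w_{\abs{u}} \in \vartheta^m(u)$ with $w_j \in \vartheta^m(u_j)$, making the same branch choice $w_j$ at every occurrence of $u_j$ inside the set-product expansion of $\vartheta^m(u^N)$ produces $w^N$ as an element of $\vartheta^m(u^N) \subset \mc L$, where legality is preserved because $u^N \in \mc L$ and substitutions map legal words to legal words. Hence $\vartheta^m(u) \subset \mc P(p_m)$, which in particular yields the infinitely many periodic words mentioned in the statement. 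Semi-compatibility makes the decomposition of every element of $\vartheta^m(u)$ into blocks of lengths $\ell_{m,u_1},\ldots,\ell_{m,u_{\abs{u}}}$ unique, so
\[
\#\vartheta^m(u) \,=\, \prod_{j=1}^n (\#\vartheta^m(a_j))^{\abs{u}_{a_j}}
\qquad \text{and} \qquad
\log(\#\mc P(p_m)) \,\geqslant\, \Phi(u)^\intercal \bm q_m,
\]
reducing the claim to $\liminf_{m\to\infty} \Phi(u)^\intercal \bm q_m / \Phi(u)^\intercal \bm\ell_m \geqslant s$.

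This final asymptotic step is where Perron--Frobenius theory enters, and it is the main obstacle in the plan. The iterated form of Lemma~\ref{Lemma:q-bounds} given in the remark following its proof yields $\bm q_m \geqslant (M^\intercal)^{m-k}\bm q_k$ entry-wise for all $k\leqslant m$. Given $\varepsilon>0$, I choose $k$ large so that $\bm q_k^\intercal \bm R \geqslant (s-\varepsilon)\lambda^k$, which is possible by Theorem~\ref{Thm:entropy=inflation-entropy}, and then $N_0$ large so that $(M^\intercal)^n \geqslant (1-\varepsilon)\lambda^n \bm L \bm R^\intercal$ and $M^n \leqslant (1+\varepsilon)\lambda^n \bm R \bm L^\intercal$ entry-wise for $n \geqslant N_0$; both are consequences of the primitivity of $M$. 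Using $\bm \ell_m^\intercal = \bm 1^\intercal M^m$, the normalisation $\bm 1^\intercal \bm R = 1$, and the strict positivity $\bm L^\intercal \Phi(u)>0$, these bounds combine for $m-k \geqslant N_0$ into
\[
\frac{\Phi(u)^\intercal \bm q_m}{\Phi(u)^\intercal \bm\ell_m}
\,\geqslant\, \frac{(1-\varepsilon)(s-\varepsilon)\lambda^m (\bm L^\intercal \Phi(u))}{(1+\varepsilon)\lambda^m (\bm L^\intercal \Phi(u))}
\,=\, \frac{(1-\varepsilon)(s-\varepsilon)}{1+\varepsilon}.
\]
Letting $m\to\infty$ and then $\varepsilon\to 0$ closes the sandwich, completing the proof.
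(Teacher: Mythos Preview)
Your proof is correct and follows essentially the same route as the paper: both start from the trivial upper bound $\mc P(q)\subset\mc L_q$, then use $\vartheta^m(u)\subset\mc P(p_m)$ for a periodic seed $u$ together with the product formula $\#\vartheta^m(u)=\prod_j(\#\vartheta^m(a_j))^{\Phi(u)_j}$, and finish with the iterated lower bound of Lemma~\ref{Lemma:q-bounds} and Perron--Frobenius asymptotics to recover $s$. The only differences are cosmetic---you spell out the periodicity-preservation argument and use entry-wise PF bounds where the paper invokes the limit $\lambda^{-m}M^m\to\bm R\bm L^\intercal$ directly.
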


\begin{proof}
Choose some $q \in \mathbb{N}$ and $u \in \mc P(q)$. Then, $w \in \mc P(\abs{\vartheta^m(u)}_{(\ell)} )$ for all $w \in \vartheta^m(u)$, by construction. That is, $\vartheta^m(u) \subset \mc P(\abs{\vartheta^m(u)}_{(\ell)})$. Denote by $\Phi(u)$ the Abelianisation of $u$. It is then a straightforward application of PF theory to conclude that
\[
\lim_{m \rightarrow \infty} \frac{\abs{\vartheta^m(u)}_{(\ell)}}{\lambda^m} = \lim_{m \rightarrow \infty} \frac{1}{\lambda^m} \sum_{i=1}^n \abs{u}_{a_i} \abs{\vartheta^m(a_i)}_{(\ell)} = \lim_{m \rightarrow \infty}  \frac{1}{\lambda^m} \bm \ell_m^\intercal \Phi(u) 
=  \mathbf{1}^\intercal \bm R \bm L^\intercal \Phi(u) = \bm L^\intercal \Phi(u).
\]
Also, we find a lower bound for the cardinality of some sets of periodic words by
\[
\# \mc P \bigl(\abs{\vartheta^m(u)}_{(\ell)} \bigr) 
\, \geqslant \, \# \vartheta^m(u) 
\, = \, \prod_{i=1}^n (\# \vartheta^m(a_i))^{\Phi(u)_i}.
\]
Recalling that $q_{m,i} = \log(\# \vartheta^m(a_i))$, we find
\begin{align*}
\limsup_{m \rightarrow \infty} \frac{1}{\abs{\vartheta^m(u)}_{(\ell)}} \log \bigl(\# \mc P \bigl(\abs{\vartheta^m(u)}_{(\ell)}\bigr) \bigr) 
& \, \geqslant \, \limsup_{m \rightarrow \infty} \frac{1}{\abs{\vartheta^m(u)}_{(\ell)}} \bm q_m^\intercal \Phi(u) 
\\ & \, \geqslant \, \limsup_{m \rightarrow \infty} \frac{1}{\bm L^\intercal \Phi(u)} \frac{1}{\lambda^r} \bm q_r^\intercal \frac{M^{m-r}}{\lambda^{m-r}} \Phi(u) 
\\ & \, = \, \frac{1}{\lambda^r} \bm q_r^\intercal \bm R
\, \xrightarrow{ \ts r \rightarrow \infty \ts } \, s,
\end{align*}
where we have made use of Lemma~\ref{Lemma:q-bounds} in the last inequality and \eqref{Eq:Thm-entropy-calc} for the last step.
This finishes the proof.
\end{proof}

\section*{Acknowledgements} 

It is a pleasure to thank Michael Baake, Dan Rust, Neil Ma\~{n}ibo and Timo Spindeler for helpful discussions.
This work is supported by the German Research Foundation (DFG) via
the Collaborative Research Centre (CRC 1283) and by the Research Centre of Mathematical Modelling (RCM$^2$) of Bielefeld University.

\end{document}